\definecolor{darkgreen}{rgb}{0,0.5,0}
\definecolor{darkred}{rgb}{0.7,0,0}
\theoremstyle{plain}
\newtheorem{lemma}{Lemma}[section]
\newtheorem{thm}[lemma]{Theorem}
\newtheorem{cor}[lemma]{Corollary}
\theoremstyle{definition}
\newtheorem{rmk}[lemma]{Remark}
\numberwithin{equation}{section}
\newcommand{\m}{\ensuremath{{\cal M}}}
\newcommand{\cc}{\ensuremath{{\cal C}}}
\newcommand{\cl}{\ensuremath{{\cal L}}}
\newcommand{\pl}[2]{{\frac{\partial #1}{\partial #2}}}
\newcommand{\al}{\alpha}
\newcommand{\be}{\beta}
\newcommand{\ga}{\gamma}
\newcommand{\de}{\delta}
\newcommand{\Om}{\Omega}
\newcommand{\la}{\lambda}
\renewcommand{\th}{\theta}
\newcommand{\ep}{\varepsilon}
\newcommand{\R}{\ensuremath{{\mathbb R}}}
\newcommand{\N}{\ensuremath{{\mathbb N}}}
\newcommand{\Z}{\ensuremath{{\mathbb Z}}}
\newcommand{\downto}{\downarrow}
\newcommand{\lap}{\Delta}
\DeclareMathOperator{\interior}{int}
\DeclareMathOperator{\inj}{inj}
\newcommand{\beq}{\begin{equation}}
\newcommand{\eeq}{\end{equation}}
\newcommand{\beqa}{\begin{equation}\begin{aligned}}
\newcommand{\eeqa}{\end{aligned}\end{equation}}
\newcommand{\brmk}{\begin{rmk}}
\newcommand{\ermk}{\end{rmk}}
\newcommand{\partref}[1]{\hbox{(\csname @roman\endcsname{\ref{#1}})}}
\newcommand{\half}{\frac{1}{2}}
\newcommand{\Rm}{{\mathrm{Rm}}}
\newcommand{\Ric}{{\mathrm{Ric}}}
\newcommand{\Do}{\ensuremath{{D\backslash\{0\}}}}
\newcommand*\dz{\ensuremath{\mathrm{d}z}}
\newcommand*\pddt{\ensuremath{\frac{\partial}{\partial t}}}
\newcommand*\ee{\ensuremath{\mathop{\mathrm{e}}\nolimits}}
\title{{%\sc 
\bf
%uniqueness and nonuniqueness \\ for ricci flow on surfaces:\\
%reverse cusp singularities
Remarks on Hamilton's Compactness Theorem for Ricci 
flow 
%\footnote{Preliminary version.}
}
%\thanks{20 April 2009.}
\\ 
%\cmt{DRAFT with comments}
}
\author{Peter M. Topping}
\date{\today}
\begin{document}

\maketitle
\parskip=10pt

\begin{abstract}
A fundamental tool in the analysis of Ricci flow is a compactness result of Hamilton in the spirit of the work of Cheeger, Gromov and others.
Roughly speaking it allows one to take a sequence of Ricci flows with uniformly bounded curvature and uniformly controlled injectivity radius, and extract a subsequence that converges to a complete limiting Ricci flow. A widely quoted extension of this result allows the curvature to be bounded  uniformly only in a local sense. However, in this note we give a counterexample.
\end{abstract}

\section{Introduction}
\label{intro}

\subsection{Hamilton-Cheeger-Gromov convergence and compactness}
\label{hcg_sect}

%\cmt{all manifolds are connected}

%\cmt{got rid of complete in the defn below}

Let $(\m_i,g_i(t))$ be a sequence of smooth families of complete 
Riemannian manifolds for $t \in (a,b)$ where $-\infty \leq a<0<b \leq \infty$, and suppose $p_i \in \m_i$ for each $i$. 
Hamilton \cite{hamcptness} 
introduced a notion of convergence of such `flows' to another smooth family $(\m,g(t))$ of Riemannian manifolds for 
$t \in (a,b)$ based on the notion of Cheeger-Gromov convergence of Riemannian manifolds. As discussed in \cite[Chapter 7]{RFnotes}, we must take points $p_i\in\m_i$ and $p \in \m$, and can then say that
\begin{equation*}
\begin{aligned}
(\m_i,g_i(t),p_i) \to (\m,g(t),p)
\end{aligned}
\end{equation*}
as $i\to\infty$ in the sense of Hamilton-Cheeger-Gromov if there exist 
\begin{compactenum}[(a)] \item a sequence of compact $\Omega_i \subset \m$
exhausting $\m$ with $p \in \interior (\Omega_i)$ for each $i$, and
\item a sequence of smooth maps $\phi_i : \Omega_i \to \m_i$, 
diffeomorphic onto their image, and with $\phi_i(p)=p_i$,
\end{compactenum} such that
\begin{equation*}
\begin{aligned}
\phi^*_i g_i(t) \to g(t)
\end{aligned}
\end{equation*}
smoothly locally on $\m \times (a,b)$
as $i\to\infty$.
%
%in the sense that $\phi^*_i g_i(t) - g(t)$ and its
%derivatives of every order
%(with respect to time as well as covariant space derivatives with respect 
%to any fixed background connection)
%converge uniformly to zero on every compact subset of $\m \times (a,b)$. 

%\cmt{we'll also need standard Cheeger-Gromov convergence and compactness at some points, but probably can just refer to RFnotes}

If the limit $g(t)$ is complete at some time $t_0\in (a,b)$, then this is the unique limit that is complete at time $t_0$, modulo a time-independent isometry. 

In the modern theory of Ricci flow, one of the most fundamental tools is the following compactness theorem of Hamilton
\cite{hamcptness} which holds if one has suitable control on the curvature -- hypthesis (i) -- and the injectivity radius -- hypothesis (ii), where $\inj(\m,g,p)$ denotes the injectivity radius of $(\m,g)$ at $p\in\m$.
For further information and intuition, see \cite[Chapter 7]{RFnotes} and \cite{formations}.

\begin{thm}[Compactness of Ricci flows; Hamilton \cite{hamcptness}]\label{THM_RFcompactness}
Suppose that  $(\m_i,g_i(t))$ is a sequence of complete Ricci flows on $n$-dimensional manifolds $\m_i$
for $t \in (a,b)$, where $-\infty \leq a<0<b \leq \infty$. 
Suppose that $p_i \in \m_i$ for each $i$, and that 
\begin{equation*}
\begin{aligned}
(i)\qquad \qquad & \sup_i \sup_{x \in \m_i,~t \in (a,b)} 
\big|\Rm(g_i(t))\big|(x) < \infty;\qquad\text{and} \\
(ii)\qquad\qquad &\inf_i~ \inj (\m_i,g_i(0),p_i) > 0.
\end{aligned}
\end{equation*}
Then there exist a manifold $\m$ of dimension $n$, 
a complete Ricci flow 
$g(t)$ on \m\ for $t \in (a,b)$, and a point $p \in
\m$ such that, after passing to a subsequence in $i$, we have
\begin{equation*}
\begin{aligned}
(\m_i,g_i(t),p_i) \to (\m,g(t),p),
\end{aligned}
\end{equation*}
as $i\to\infty$.
\end{thm}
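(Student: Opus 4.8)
The plan is to reduce the statement to the Cheeger--Gromov--Hamilton compactness theorem for Riemannian manifolds applied to the time-zero slices $(\m_i,g_i(0),p_i)$, and then to upgrade the resulting convergence to the full space-time statement using Shi's derivative estimates and the Ricci flow equation. First I would produce uniform bounds on the covariant derivatives of curvature at time zero: fixing $t_0\in(a,0)$, hypothesis (i) gives $|\Rm(g_i(t))|\le C_0$ on $\m_i\times[t_0,0]$ uniformly in $i$, and Shi's local derivative estimates for Ricci flow then supply, for each $m\ge0$, a constant $C_m=C_m(n,C_0,-t_0)$ with $\sup_{\m_i}|\nabla^m\Rm(g_i(0))|\le C_m$ for all $i$. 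Together with hypothesis (ii), which prevents collapse at the base point, the pointed manifolds $(\m_i,g_i(0),p_i)$ then satisfy the hypotheses of the Cheeger--Gromov--Hamilton compactness theorem for manifolds (see \cite[Chapter 7]{RFnotes}).

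Applying that theorem and passing to a subsequence, I obtain a complete pointed $n$-manifold $(\m,g_\infty,p)$, a compact exhaustion $\Omega_i\subset\m$ with $p\in\interior(\Omega_i)$, and maps $\phi_i:\Omega_i\to\m_i$ diffeomorphic onto their images, with $\phi_i(p)=p_i$ and $\phi_i^*g_i(0)\to g_\infty$ smoothly locally on $\m$. It then remains only to show that, after a further subsequence, the pulled-back Ricci flows $h_i(t):=\phi_i^*g_i(t)$ --- defined on $\interior(\Omega_i)$ for $t\in(a,b)$ --- converge in $C^\infty_{\mathrm{loc}}(\m\times(a,b))$ to a complete Ricci flow $g(t)$ with $g(0)=g_\infty$, since the data $\m,p,\Omega_i,\phi_i$ already in hand will then witness the asserted Hamilton--Cheeger--Gromov convergence.

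To get this, I would establish uniform bounds on compact subsets of $\m\times(a,b)$. Fix a compact $K\subset\m$ and a compact interval $[\alpha,\beta]\subset(a,b)$, which we may take to contain $0$; for $i$ large, $K\subset\interior(\Omega_i)$. Curvature norms are diffeomorphism-invariant, so $|\Rm(h_i(t))|_{h_i(t)}\le C_0$ on $K\times[\alpha,\beta]$, whence the Ricci curvature of $h_i(t)$ is bounded there by some $C_0'=C_0'(n,C_0)$; integrating $\partial_t h_i=-2\Ric(h_i)$ from $t=0$, and using that $h_i(0)\to g_\infty$ uniformly on $K$, one finds that for $i$ large the $h_i(t)$ are uniformly bi-Lipschitz to $g_\infty$ on $K\times[\alpha,\beta]$. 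Applying Shi's estimates on a slightly larger time interval then bounds $|\nabla^m\Rm(h_i(t))|$ uniformly on $K\times[\alpha,\beta]$, for each $m$, and a now-standard bootstrap upgrades this to uniform bounds on all fixed-coordinate derivatives of $h_i$ on $K\times[\alpha,\beta]$: in a fixed chart one differentiates the flow equation, writes the coordinate derivatives of $\Ric(h_i)$ in terms of the $h_i$-covariant derivatives $\nabla^j\Rm(h_i)$, $j\le m$, and the Christoffel symbols of $h_i$ (controlled by lower-order derivatives of $h_i$), uses the smooth convergence of $h_i(0)$ to bound all derivatives at $t=0$, and closes up with Gronwall's inequality in $t$. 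Arzel\`a--Ascoli plus a diagonal argument over an exhaustion of $\m\times(a,b)$ then gives a subsequence with $h_i\to g$ in $C^\infty_{\mathrm{loc}}(\m\times(a,b))$; taking limits in $\partial_t h_i=-2\Ric(h_i)$ and in the curvature bound shows $g(t)$ is a Ricci flow with $|\Rm(g(t))|\le C_0$ on $\m\times(a,b)$ and $g(0)=g_\infty$, and since $|\Ric(g(t))|$ is then bounded and $g_\infty$ is complete, the standard comparison $e^{-C|t|}g(0)\le g(t)\le e^{C|t|}g(0)$ holds on $\m$, so $g(t)$ is complete for every $t\in(a,b)$.

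The hard part is the uniform $C^k$ control of the pulled-back metrics in fixed coordinates. The diffeomorphism-invariant inputs --- the bounds on $|\nabla^m\Rm(h_i(t))|$ --- degrade as $t\to a$, which is exactly why one must restrict to compact time subintervals, and turning them, together with the time-zero regularity handed over by the manifold compactness theorem, into honest coordinate bounds is precisely the bootstrap through the Ricci flow equation indicated above. Everything else --- Shi's estimates, Cheeger--Gromov--Hamilton compactness for manifolds, Arzel\`a--Ascoli --- is standard machinery.
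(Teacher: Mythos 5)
The paper does not prove Theorem~\ref{THM_RFcompactness}; it is quoted verbatim from Hamilton \cite{hamcptness} as background, so there is no in-paper proof to compare against. Your sketch correctly reproduces the standard Hamilton argument: obtain uniform $|\nabla^m\Rm|$ bounds at $t=0$ from hypothesis~(i) and Shi's estimates on a fixed interval $[t_0,0]$, feed the time-zero slices into Cheeger--Gromov--Hamilton compactness for manifolds using~(ii), pull the flows back by the resulting diffeomorphisms, and then bootstrap the curvature bounds through the Ricci flow equation (Christoffel symbols controlled by lower-order derivatives of $h_i$, Gronwall in $t$) to get $C^\infty_{\mathrm{loc}}$ convergence on $\m\times(a,b)$, with completeness of the limit following from the uniform Ricci bound and the bi-Lipschitz comparison $e^{-C|t|}g(0)\leq g(t)\leq e^{C|t|}g(0)$. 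One small point worth making explicit: since the $|\nabla^m\Rm|$ bounds are diffeomorphism-invariant, it is cleanest to run Shi's global estimates on the complete manifolds $(\m_i,g_i(t))$ and only then pull back, rather than invoking local Shi estimates on the partial domains $\interior(\Omega_i)$; this avoids having to shrink in space as well as time. With that understood, the sketch is sound.
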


In many important applications, particularly while making contradiction arguments, the uniform upper bound for curvature is too strong a hypothesis. One can weaken this by assuming only local bounds on the curvature, without significant alteration of the proof. For example, we have:

\begin{thm}[Compactness of Ricci flows: Extension 1]
\label{ct2}
Suppose that  $(\m_i,g_i(t))$ is a sequence of complete Ricci flows on $n$-dimensional manifolds $\m_i$
for $t \in (a,b)$, where $-\infty \leq a<0<b \leq \infty$. 
Suppose that $p_i \in \m_i$ for each $i$, and that
\begin{compactenum}[(i)]
\item
for all $r>0$, there exists $M=M(r)<\infty$ such that for all $t\in (a,b)$ and for all $i$, there holds
$$\sup_{B_{g_i(0)(p_i,r)}} 
\big|\Rm(g_i(t))\big|_{g_i(t)} \leq M,\qquad\text{and}$$
\item
$$\inf_i~ \inj (\m_i,g_i(0),p_i) > 0.$$
\end{compactenum}
Then there exist a manifold $\m$ of dimension $n$, 
a Ricci flow 
$g(t)$ on \m\ for $t \in (a,b)$, and a point $p \in
\m$ such that $(\m,g(0))$ is complete, and after passing to a subsequence in $i$, we have
\begin{equation}
\label{cgnce_statement}
\begin{aligned}
(\m_i,g_i(t),p_i) \to (\m,g(t),p),
\end{aligned}
\end{equation}
as $i\to\infty$.
\end{thm}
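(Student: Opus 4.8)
The plan is to follow the proof of Theorem~\ref{THM_RFcompactness} given in \cite[Chapter 7]{RFnotes}, which has a geometric step --- Cheeger--Gromov compactness of the time-zero Riemannian manifolds $(\m_i,g_i(0),p_i)$ --- and a parabolic step --- upgrading this to $C^\infty_{\mathrm{loc}}$ convergence of the flows using Shi's estimates and the Ricci flow equation --- and to localise the geometric step, the point being that the parabolic step only ever uses curvature bounds on bounded spacetime regions.

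For the geometric step, hypothesis (i) at $t=0$ gives $|\Rm(g_i(0))|\le M(r)$ on $B_{g_i(0)}(p_i,r)$ for every $r>0$ and every $i$. Combined with hypothesis (ii) this controls these balls uniformly: the injectivity radius bound at $p_i$ and the curvature bound give a lower volume bound for a small ball about $p_i$; Bishop--Gromov volume comparison (valid since $\Ric(g_i(0))\ge-(n-1)M(3r)$ on $B_{g_i(0)}(p_i,3r)$) spreads this to a uniform lower bound $\Vol B_{g_i(0)}(x,1)\ge v(r)>0$ (volumes with respect to $g_i(0)$) for every $x\in B_{g_i(0)}(p_i,r)$; and a Cheeger--Gromov--Taylor type injectivity radius estimate then yields $\inj(\m_i,g_i(0),x)\ge\iota(r)>0$ for all such $x$ and all $i$. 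Feeding these into the local version of the Cheeger--Gromov compactness theorem for manifolds (obtained by localising the argument in \cite[Chapter 7]{RFnotes}), we get, after passing to a subsequence, a smooth $n$-manifold $\m$, a metric $g(0)$, a point $p\in\m$, a compact exhaustion $\Omega_i$ of $\m$ with $p\in\interior(\Omega_i)$, and smooth maps $\phi_i:\Omega_i\to\m_i$ diffeomorphic onto their images with $\phi_i(p)=p_i$ and $\phi_i^*g_i(0)\to g(0)$ smoothly locally on $\m$.

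The delicate point is that $(\m,g(0))$ is \emph{complete}; this is where completeness of each $(\m_i,g_i(0))$ is used. Since $\overline{B_{g_i(0)}(p_i,r)}$ is compact for every $r$, the exhaustion can be arranged so that $\phi_i(\Omega_i)$ contains the (relatively compact) ball $B_{g_i(0)}(p_i,r_i)$ for some $r_i\to\infty$; then, using $\phi_i^*g_i(0)\to g(0)$, any $g(0)$-Cauchy sequence in $\m$ is seen to remain in a fixed relatively compact metric ball $B_{g(0)}(p,R)$ and hence to converge, so no points are lost in the limit at time $0$. Note that it is only completeness \emph{at} $t=0$ that survives this argument, which is reflected in the statement of the theorem.

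For the parabolic step, hypothesis (i) gives $|\Rm(g_i(t))|\le M(r)$ on the \emph{fixed} ball $B_{g_i(0)}(p_i,r)$ for all $t\in(a,b)$. The equation $\dop_t g_i=-2\Ric(g_i)$ then makes $g_i(t)$ uniformly equivalent to $g_i(0)$ on $B_{g_i(0)}(p_i,r)$ over any compact subinterval of $(a,b)$, and Shi's local derivative estimates --- applicable since the curvature bound holds for all times slightly before any given one --- give uniform bounds on $|\nabla^k\Rm(g_i(t))|$ on $B_{g_i(0)}(p_i,r/2)$, for each $k$, over compact subintervals of $(a,b)$. Pulling these back by $\phi_i$ and using $\dop_t(\phi_i^*g_i)=-2\Ric(\phi_i^*g_i)$, the Arzel\`a--Ascoli theorem yields a further subsequence with $\phi_i^*g_i(t)\to g(t)$ in $C^\infty_{\mathrm{loc}}(\m\times(a,b))$; the limit $g(t)$ is a Ricci flow restricting at $t=0$ to the complete metric $g(0)$, giving the convergence \eqref{cgnce_statement}. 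The main obstacle throughout is the bookkeeping forced by the mismatch between the region of curvature control --- a ball measured at the fixed time $0$ --- and the moving flow, together with securing completeness of the limit at $t=0$ as above.
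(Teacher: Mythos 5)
The paper does not actually carry out a proof of Theorem~\ref{ct2}: it states that it follows from Hamilton's Theorem~\ref{THM_RFcompactness} ``without significant alteration of the proof'', referring to \cite{hamcptness} and \cite[Chapter 7]{RFnotes}. Your argument --- propagating the pointwise injectivity-radius bound at $p_i$ to a uniform lower bound on balls via Bishop--Gromov and Cheeger--Gromov--Taylor, extracting a pointed Cheeger--Gromov limit at $t=0$ whose completeness uses that of each $(\m_i,g_i(0))$, and then upgrading to a spacetime limit via local Shi estimates on the fixed balls $B_{g_i(0)}(p_i,r)$ --- is exactly the standard localisation the paper is alluding to, and it correctly yields completeness only at $t=0$ (consistent with the theorem's statement and with the counterexample that is the point of the paper). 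So this is essentially the intended proof, spelled out.
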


In the literature this result is often stated with the additional conclusion that the limit Ricci flow $g(t)$ is \emph{complete} -- that is, $g(t)$ is complete for every $t\in (a,b)$ -- which is often important in applications (see also Section \ref{trueorfalse}). It has been noted very recently \cite[Appendix E]{KLv4} following objections by Cabezas-Rivas and Wilking that this completeness `does not immediately follow'. In this note we demonstrate that in fact the completeness fails in general.

\subsection{Contracting cusps - intuition behind the construction}

In this section we sketch an intuitive construction which indicates that a limiting Ricci flow arising in Theorem \ref{ct2} can be complete over an open time interval containing $t=0$, but incomplete beyond a certain time. The precise construction we make in Section \ref{precisesection} will be a little different to reduce the amount of technology required to make it rigorous.

At the core of the construction are the `contracting cusp' examples of Ricci flows we constructed in \cite{revcusp}. In that work we considered starting a Ricci flow with a complete, bounded-curvature, noncompact surface whose ends were asymptotic to hyperbolic cusps, in some weak sense. There is a standard way of flowing such a surface (constructed in \cite{GT2}, extending the flow of Hamilton and Shi) during which the cusps remain in place. A simple example which suffices for now would be to consider a complete hyperbolic surface $(T^2_p,g_{hyp})$ that is conformally  a torus $T^2$ punctured at one point $p\in T^2$, and take the Ricci flow which evolves simply by dilation:
$$g_1(t):=(1+2t)g_{hyp},$$
for all $t\in [0,\infty)$.
However, the central message of \cite{revcusp} is that there is another way of flowing such a surface where one imagines capping off the hyperbolic cusp infinitely far out, and letting it contract down. More precisely, and most concretely, we showed that it is possible to find a smooth Ricci flow $g_2(t)$ on the torus $T^2$, for $t\in (0,\infty)$ so that $g_2(t)\to g_{hyp}$ smoothly locally on $T^2_p:=T^2\backslash \{p\}$ as $t\downto 0$. One can view a cusp as developing at $p$ as we go backwards in time to $t=0$ -- a so-called `reverse singularity' (see also \cite{rick_survey}). The flow that contracts the cusp is unique as described in \cite[Theorem 1.5]{revcusp}. %i.e. modulo curvature bound

One way of viewing this now would be as a way of constructing an example of \emph{nonuniqueness} for Ricci flow. In particular, a perfectly valid smooth Ricci flow on $T^2_p$ would consist of $g_1(t+1)$ for $t\in [-1,1]$, followed by an appropriate scaling of $g_2(t)$, restricted to $T^2_p$. Precisely, that scaled flow would be the restriction to $T^2_p$ of $5g_2((t-1)/5)$ for $t\in (1,\infty)$. Beyond $t=1$, the flow would be incomplete because we have removed the point $p$.

The core principle of this paper is:
\begin{quote}
\em
Such flows can arise as Hamilton-Cheeger-Gromov limits of complete Ricci flows within the extension of Hamilton's Compactness Theorem given in Theorem \ref{ct2}.
\end{quote}

A precise statement will be given in Section \ref{precisesection}, but first we sketch how one could hope to construct a sequence of complete, bounded-curvature Ricci flows satisfying the hypotheses of Theorem \ref{ct2}, with a limit flow as given above.

The basic building blocks are the complete hyperbolic metric $g_{hyp}$ on $T^2_p$ and a complete metric $g_{bulb}$ on the punctured 2-sphere (equivalently on the plane) whose end is asymptotic to a hyperbolic cusp, and whose area is exactly $8\pi$.  There are many results which will give a complete Ricci flow starting at $g_{bulb}$, and describe its properties -- see \cite{ham_dask}, \cite{dask_sesum}, \cite{GT2} and the references therein. The important point to note is that it will flow for exactly time $2$ before the area decreases to zero. During this time, area gets sucked out of the bulb part of the manifold, and at each time the flow has a cusp-like end. %(See Figure ??)

The Ricci flows $g_k(t)$ we wish to imagine putting into Theorem \ref{ct2} will exist on the whole of $T^2$. They will be the Ricci flows whose initial data at $t=-1$ arises by chopping off the ends of the cusps of both the metrics $g_{hyp}$ and $g_{bulb}$, and gluing them together. The larger $k$ becomes, the further out we wish to make our truncations. %(See Figure ??)
If we fix a base-point $q\in T^2\backslash\{p\}$, and consider $p$ to correspond to some fixed point in the bulb surface for each $k$, then the distance $d_{g_k(-1)}(p,q)$ converges to infinity as $k\to\infty$.

The idea then is that the two distinct parts of $g_k(-1)$ will evolve largely independently of each other within the time interval $[-1,1)$,
but then at time $t=1$, the area within the bulb part of the flow will have been exhausted, and the cusp end of the torus part of the flow should start contracting. The effect of this is that during the initial time interval $[-1,1)$, the Ricci flows $(T^2,g_k(t),q)$ will converge to a homothetically expanding  hyperbolic metric on $T^2_p$ (with the bulb part too far away to see) but for $t>1$, the manifolds 
$(T^2, g_k(t),q)$ should have uniformly controlled diameter (independent of $k$) and will converge to a smooth metric on the whole torus in the Cheeger-Gromov sense.
The construction is illustrated in the following figure.

\def\svgwidth{400pt}
%% Creator: Inkscape inkscape 0.48.2, www.inkscape.org
%% PDF/EPS/PS + LaTeX output extension by Johan Engelen, 2010
%% Accompanies image file '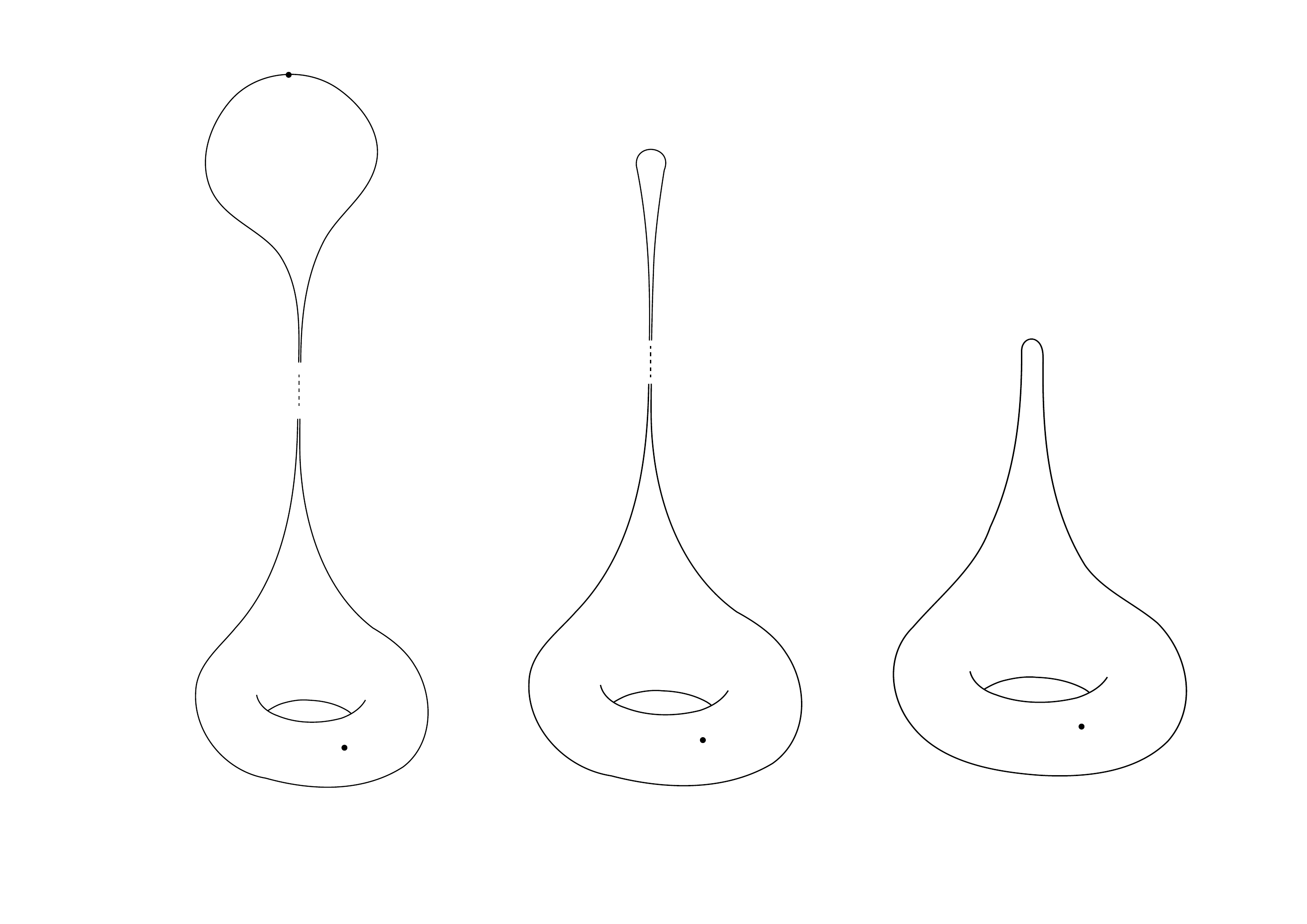' (pdf, eps, ps)
%%
%% To include the image in your LaTeX document, write
%%   \input{<filename>.pdf_tex}
%%  instead of
%%   \includegraphics{<filename>.pdf}
%% To scale the image, write
%%   \def\svgwidth{<desired width>}
%%   \input{<filename>.pdf_tex}
%%  instead of
%%   \includegraphics[width=<desired width>]{<filename>.pdf}
%%
%% Images with a different path to the parent latex file can
%% be accessed with the `import' package (which may need to be
%% installed) using
%%   \usepackage{import}
%% in the preamble, and then including the image with
%%   \import{<path to file>}{<filename>.pdf_tex}
%% Alternatively, one can specify
%%   \graphicspath{{<path to file>/}}
%% 
%% For more information, please see info/svg-inkscape on CTAN:
%%   http://tug.ctan.org/tex-archive/info/svg-inkscape
%%
\begingroup%
  \makeatletter%
  \providecommand\color[2][]{%
    \errmessage{(Inkscape) Color is used for the text in Inkscape, but the package 'color.sty' is not loaded}%
    \renewcommand\color[2][]{}%
  }%
  \providecommand\transparent[1]{%
    \errmessage{(Inkscape) Transparency is used (non-zero) for the text in Inkscape, but the package 'transparent.sty' is not loaded}%
    \renewcommand\transparent[1]{}%
  }%
  \providecommand\rotatebox[2]{#2}%
  \ifx\svgwidth\undefined%
    \setlength{\unitlength}{841.88974609bp}%
    \ifx\svgscale\undefined%
      \relax%
    \else%
      \setlength{\unitlength}{\unitlength * \real{\svgscale}}%
    \fi%
  \else%
    \setlength{\unitlength}{\svgwidth}%
  \fi%
  \global\let\svgwidth\undefined%
  \global\let\svgscale\undefined%
  \makeatother%
  \begin{picture}(1,0.70707072)%
    \put(0,0){\includegraphics[width=\unitlength]{hct_figure1_7.pdf}}%
    \put(0.04851812,0.55440036){\color[rgb]{0,0,0}\makebox(0,0)[lb]{\smash{$g_{bulb}$:}}}%
    \put(0.05110575,0.18695646){\color[rgb]{0,0,0}\makebox(0,0)[lb]{\smash{$g_{hyp}$:}}}%
    \put(0.20442301,0.05369338){\color[rgb]{0,0,0}\makebox(0,0)[lb]{\smash{$t=-1$}}}%
    \put(0.4774183,0.05692794){\color[rgb]{0,0,0}\makebox(0,0)[lb]{\smash{$t \sim 1$}}}%
    \put(0.77240846,0.06145627){\color[rgb]{0,0,0}\makebox(0,0)[lb]{\smash{$t>1$}}}%
    \put(0.27234838,0.13520382){\color[rgb]{0,0,0}\makebox(0,0)[lb]{\smash{$q$}}}%
    \put(0.54469669,0.14037906){\color[rgb]{0,0,0}\makebox(0,0)[lb]{\smash{$q$}}}%
    \put(0.83774621,0.1507296){\color[rgb]{0,0,0}\makebox(0,0)[lb]{\smash{$q$}}}%
    \put(0.21412664,0.66372785){\color[rgb]{0,0,0}\makebox(0,0)[lb]{\smash{$p$}}}%
  \end{picture}%
\endgroup%

It is not too difficult to argue precisely, using pseudolocality technology (see \cite{P1}) that one obtains the expanding hyperbolic metric flow as limit on an initial time interval. In order to show that the cusp will start collapsing at a uniformly controlled rate  before some uniformly bounded time we require some more involved \emph{a priori} estimates. To simplify this aspect, we will make a slight adjustment of the construction outlined above, notably replacing the bulb metric by long thin cigars with $k$-dependent geometry, but with area uniformly bounded above and below. We will then be able to appeal to some of the ideas from \cite{revcusp}. Precise assertions are given in the next section.

\subsection{A precise example}
\label{precisesection}

We will work on the two-dimensional unit disc $D$ in the plane, centred at the origin, and the same disc punctured at the origin. All metrics considered will respect the conformal structure inherited from the plane. (Ricci flow in two-dimensions preserves the conformal class -- see Appendix \ref{2Dappendix}.)
From now on in this paper, $g_{hyp}$ will always refer to the unique complete hyperbolic metric on $\Do$, which can be written $e^{2h}|\dz|^2:=e^{2h}(dx^2+dy^2)$ where $h=-\ln[r(-\ln r)]$.

\begin{thm}[Main Theorem]
\label{mainthm}
There exist a sequence of smooth, complete Ricci flows 
$(D,g_k(t))$ and a smooth Ricci flow 
$(\Do,g_\infty(t))$, all defined for $t\in [0,\infty)$, such that
\begin{enumerate}
\item
$g_k(t)\to g_\infty(t)$ smoothly locally on 
$(\Do)\times [0,\infty)$ as $k\to\infty$;
\item
for $t\in [0,\eta]$ (for some $\eta\in (0,1)$) we have $g_\infty(t)=(1+2t)g_{hyp}$;
\item
for $t\geq 1$, the manifold $(\Do,g_\infty(t))$ is \emph{not} complete;
\item
for $t\geq 1$, the limit $g_\infty(t)$ can be extended across the origin to give a smooth Ricci flow on the whole of $D$, and $g_k(t)$ converges smoothly locally to that extension on 
$D\times [1,\infty)$ as $k\to\infty$;
\item
for all $\ep\in (0,\half)$, there exists $C<\infty$ (independent of $k$) such that the Gauss curvature $K$ is controlled by
$$\sup_{(D_{1-\ep}\backslash D_\ep)\times [0,\infty)}\big|K[g_k(t)]\big|_{g_k(t)}<C\qquad\text{ and }\qquad
\sup_{D_{1-\ep}\times [1,\infty)}\big|K[g_k(t)]\big|_{g_k(t)}<C.$$
\end{enumerate}
\end{thm}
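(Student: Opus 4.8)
The plan is to build the $g_k$ explicitly as the conformal Ricci flows on $D$ starting from glued initial data, and to derive the limit $g_\infty$ together with its properties by a combination of pseudolocality near the hyperbolic end and the barrier/monotonicity arguments of \cite{revcusp} near the puncture. First I would fix the conformal description: every metric is $e^{2u}|dz|^2$ on $D$, and the Ricci flow becomes the logarithmic fast-diffusion equation $\pl{u}{t}=e^{-2u}\lap u$ (here $\lap$ is the flat Laplacian), as recorded in Appendix \ref{2Dappendix}. For the initial data at $t=0$ I would take $g_k(0)=e^{2u_k(0)}|dz|^2$, where $u_k(0)$ agrees with the hyperbolic conformal factor $h$ on an annulus $D_{1-\ep}\setminus D_{\rho_k}$ (with $\rho_k\downto 0$) and, on the inner disc $D_{\rho_k}$, is replaced by the conformal factor of a long thin cigar-type cap of total area bounded above and below independently of $k$, glued smoothly to $h$ across $\partial D_{\rho_k}$. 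Short-time existence and smoothness of each complete $g_k(t)$ on $D$ for $t\in[0,\infty)$ follows from the theory in \cite{ham_dask}, \cite{dask_sesum}, \cite{GT2}; the logarithmic fast-diffusion flow on the disc does not vanish in finite time once the total area is infinite near $\partial D$, so the flows persist for all $t\ge0$.

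Next I would establish item (1), the existence of a smooth local limit $g_\infty(t)$ on $\Do\times[0,\infty)$. Away from the origin the initial data $u_k(0)$ stabilise to $h$ on any fixed annulus, and I would feed this into pseudolocality (\cite{P1}) to get, for each compact $K\subset\Do$ and each $[0,T]$, uniform-in-$k$ bounds on $|K[g_k]|$ and all derivatives; Arzel\`a--Ascoli plus a diagonal argument over an exhaustion then extracts the limit $g_\infty(t)$, which is automatically a Ricci flow. For item (2): on the initial annular region the data literally equal $g_{hyp}$, the homothetic flow $(1+2t)g_{hyp}$ is the \emph{unique} complete flow there in the sense of \cite[Theorem 1.5]{revcusp}, and by pseudolocality the $g_k$ stay $C^\infty_{loc}$-close to it on $\Do$ for a short time $\eta\in(0,1)$, so $g_\infty(t)=(1+2t)g_{hyp}$ on $[0,\eta]$. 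This immediately yields the first half of item (3): for $t\in[0,\eta]$, $g_\infty(t)$ is a \emph{complete} hyperbolic-type metric on $\Do$, hence genuinely incomplete interpretation is reserved for $t\ge1$; I would remark that the cusp end at $\partial D_\ep$ remains complete for all time (pseudolocality again), so incompleteness can only occur at the puncture.

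The heart of the argument — and the main obstacle — is items (3) and (4): showing that by time $t=1$ the area of the cap has been exhausted uniformly in $k$, so the puncture at the origin ``heals'' and $g_\infty(t)$ extends smoothly across $0$ to a Ricci flow on $D$ for $t\ge1$, while remaining incomplete on $\Do$ merely because the point $0$ has been deleted. The strategy is a two-sided area/barrier estimate in the spirit of \cite{revcusp}: the cigar-type cap has area in $[A_0,A_1]$ with $0<A_0\le A_1<\infty$ independent of $k$, and under the flow area is lost from the cap region at a rate controlled from below (by $-4\pi$ times an Euler-characteristic contribution, via $\frac{d}{dt}\mathrm{Area}=-\int K$ and Gauss--Bonnet on the cap with its cusp boundary), so after a uniformly bounded time $t_\ast<1$ the cap has collapsed and the conformal factor on any fixed $D_\ep$ has become bounded above uniformly in $k$. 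Conversely a lower barrier — constructed from a fixed smooth metric on $D$ whose flow exists past $t=1$ — prevents the metric from collapsing to zero there. Combining these with the uniqueness of \cite[Theorem 1.5]{revcusp} identifies $g_\infty(t)$ for $t\ge1$ with the restriction to $\Do$ of a smooth flow on $D$, giving (4), and with the origin deleted this flow is incomplete, giving the $t\ge1$ half of (3). The delicate point I expect to fight with is making the ``area gets exhausted by $t=1$'' quantitative and uniform in $k$: one must rule out the cap's area draining arbitrarily slowly as $k\to\infty$, which is exactly where the explicit long-thin-cigar geometry and the a priori estimates borrowed from \cite{revcusp} are needed, rather than a soft compactness argument.

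Finally, item (5) — the uniform curvature bounds — is obtained as a byproduct of the above: on $(D_{1-\ep}\setminus D_\ep)\times[0,\infty)$ the bound comes from pseudolocality applied to the region where $u_k(0)$ has already stabilised to $h$ (valid for all time, since the hyperbolic end is a steady region under the homothetic comparison and the flow stays controlled there), while on $D_{1-\ep}\times[1,\infty)$ it comes from the smooth extension across the origin established in (4): once the cap has collapsed the metrics $g_k(t)$ are $C^\infty_{loc}$-precompact on all of $D$, so $|K[g_k(t)]|$ is bounded uniformly on $D_{1-\ep}$ for $t\ge1$. The only care needed is that the two constants can be chosen depending on $\ep$ but not on $k$ or $t$, which follows since all the estimates invoked are time-global once one is past the collapse time $t_\ast<1$.
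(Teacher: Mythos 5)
Your high-level strategy matches the paper: glue a $k$-dependent cigar-type cap onto the hyperbolic cusp, extract a limit by local estimates, and then control the collapse near the origin via ideas from \cite{revcusp}. And you correctly identify that the delicate step is making ``the cap collapses by a time $<1$, uniformly in $k$, and the conformal factor then becomes bounded'' precise. But the mechanism you propose for this step --- Gauss--Bonnet area accounting --- has a genuine gap, and it is not how the proof actually closes.

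Two concrete problems with the area argument. First, the ``cap'' is not a closed manifold but a region inside $(D,g_k(t))$ with a moving boundary across which area can flow; a Gauss--Bonnet identity of the form $\frac{d}{dt}\mathrm{Area}=-\int K$ picks up geodesic-curvature boundary terms that you cannot sign or bound uniformly in $k$, so there is no obvious lower bound on the area-loss rate. Second, and more fundamentally, even if the area in a region $D_\ep$ tends to zero by time $t_*$, this gives no pointwise upper bound on the conformal factor $v_k$ there: a vanishingly small area is compatible with $v_k$ being very large on a tiny set, which is exactly what happens during the collapse. The paper instead proves a \emph{pointwise} estimate. It first uses an ``insulating'' hyperbolic barrier (Lemma \ref{insulate_new}/Corollary \ref{insulate3}) to control $u_k(k,t)$ and $u_k(\cdot,\half)$ from above, then an explicit upper cigar barrier $\cc_k^{upper}$ with $\la=k^2/10$ (Lemma \ref{thalfcontrol}) to conclude $g_k(\half)\leq 10\,g_{hyp}$ on $\Do$, uniformly in $k$. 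This is the hypothesis needed to invoke \cite[Lemma 3.3]{revcusp} (Lemma \ref{flowupperbdlemma} here), which gives the crucial bound $\sup_{D_{1/2}} v \leq \be/t$ for $t>0$ --- a pointwise statement that no area bound would supply. After that, an expanding upper barrier controls $v_k$ for all $t\geq \tfrac34$, and parabolic regularity gives the extension across the origin.

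A smaller but worth-noting difference: you appeal to pseudolocality \cite{P1} for Part (1), Part (2), and the completeness at $\partial D$. The paper avoids pseudolocality altogether (and explicitly remarks that while it would handle the initial interval, it is not enough for the collapse). In its place are the lower barrier $g_k(t)\geq(1+2t)g_{poin}$ (which comes free from the maximally-stretched property of \cite[Theorem 1.3]{GT2} and is what gives the uniform lower bound on $v_k$ everywhere, including post-collapse) and upper barriers by hyperbolic metrics on punctured discs combined with Yau's Schwarz lemma via Lemma \ref{hyp_comparison}. Your proposed ``fixed smooth metric on $D$'' as a post-collapse lower barrier would need justification at $\partial D$; the barrier $g_{poin}$ works precisely because it is complete and lies below $g_k(0)$. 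Finally, Part (2) in the paper is a genuine estimate (Lemma \ref{shorttimecompleteness}, via the lower cigar barrier $\cc_k^{lower}$ with $\la=5k^2$) establishing short-time completeness of $g_\infty$, which together with the equality statement in Lemma \ref{hyp_comparison} forces $g_\infty(t)=(1+2t)g_{hyp}$; this is sharper than a softer pseudolocality closeness argument.
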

One consequence of this theorem is that if we take any $q\in \Do$
and define $\tilde g_k(t)=g_k(t+\eta/2)$ and 
$\tilde g_\infty(t)=g_\infty(t+\eta/2)$, and consider these flows on the time interval $(-\eta/2, 1)$, say, then
we have that for all $r>0$ there exists $C<\infty$ such that
$$\sup_{B_{\tilde g_k(0)}(q,r)\times (-\eta/2,1)}
\big|K[\tilde g_k(t)]\big|_{\tilde g_k(t)}<C,$$
and we may apply Theorem \ref{ct2} to $(D,\tilde g_k(t),q)$. 
By the uniqueness mentioned in Section \ref{hcg_sect}, the limit must be 
$(\Do,\tilde g_\infty(t),q)$ (up to a time-independent isometry) and for $t\geq 1-\eta/2$, the Riemannian manifold $(\Do,\tilde g_\infty(t))$ is not complete, so we deduce:

\begin{cor}
There exists a sequence of Ricci flows to which Theorem \ref{ct2}
applies, but for which the resulting limit is not complete at certain times.
\end{cor}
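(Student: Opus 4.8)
The plan is to deduce the Corollary directly from the Main Theorem by translating the flows in time so that the hypotheses of Theorem \ref{ct2} become verifiable. First I would fix an arbitrary base-point $q\in\Do$ and set $\tilde g_k(t):=g_k(t+\eta/2)$ and $\tilde g_\infty(t):=g_\infty(t+\eta/2)$, viewing these as flows on the time interval $(-\eta/2,1)$ (the left endpoint being chosen so that $t=0$ lies in the interior, as Theorem \ref{ct2} requires, and the right endpoint being any value exceeding $1-\eta/2$). Since $q$ is a fixed interior point of $D$ and the flows $g_k$ are complete on $D$, the hypothesis (ii) on injectivity radius at $q$ holds with a uniform lower bound independent of $k$ — here one uses that near $q$ the metrics $g_k$ converge smoothly (item 1 of the Main Theorem), so the injectivity radii converge to that of $\tilde g_\infty(0)$ at $q$, which is positive.

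The next step is the curvature hypothesis (i). Fix $r>0$. The $\tilde g_k(0)$-ball $B(q,r)$ is contained in $D_{1-\ep}\backslash D_\ep$ for $\ep$ small enough depending only on $q$ and $r$ (again using local smooth convergence near $q$ to get uniform control on how far out such a ball reaches, uniformly in $k$), so the first curvature bound in item 5 of the Main Theorem gives $\sup_{B_{\tilde g_k(0)}(q,r)\times(-\eta/2,1)}|K[\tilde g_k(t)]|_{\tilde g_k(t)}<C$ with $C$ independent of $k$. In two dimensions $|\Rm|$ is controlled by $|K|$, so hypothesis (i) of Theorem \ref{ct2} is satisfied. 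We may therefore apply Theorem \ref{ct2} to the sequence $(D,\tilde g_k(t),q)$ and, after passing to a subsequence, obtain a limit Ricci flow on $(-\eta/2,1)$ that is complete at $t=0$.

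Finally I would identify this limit. Item 1 of the Main Theorem already tells us $\tilde g_k(t)\to\tilde g_\infty(t)$ smoothly locally on $(\Do)\times(-\eta/2,1)$, and by item 2 the metric $\tilde g_\infty(0)=(1+\eta)g_{hyp}$ is complete on $\Do$. Hence $(\Do,\tilde g_\infty(t),q)$ is itself a Hamilton-Cheeger-Gromov limit of the sequence, complete at $t=0$; by the uniqueness statement recalled in Section \ref{hcg_sect}, the limit produced by Theorem \ref{ct2} must agree with $(\Do,\tilde g_\infty(t),q)$ up to a time-independent isometry. Now item 3 says that for $t\geq 1$ — equivalently, in the shifted picture, for $t\geq 1-\eta/2$, which lies in our interval $(-\eta/2,1)$ since $\eta\in(0,1)$ — the Riemannian manifold $(\Do,\tilde g_\infty(t))$ is not complete. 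This gives exactly the Corollary.

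There is really no hard obstacle here: the content is entirely in the Main Theorem, and the Corollary is a bookkeeping matter of checking the two hypotheses of Theorem \ref{ct2} and invoking uniqueness of complete limits. The one point demanding a little care is the uniform-in-$k$ control needed to pin down which $\ep$ makes $B_{\tilde g_k(0)}(q,r)\subset D_{1-\ep}\backslash D_\ep$ and to get the uniform injectivity radius lower bound; both follow from the local smooth convergence $g_k\to g_\infty$ near $q$ in item 1, together with $q$ being a fixed interior point of $D$ away from the puncture.
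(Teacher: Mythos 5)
Your proof follows the same route as the paper's: shift time by $\eta/2$ so that $t=0$ lies inside $(-\eta/2,1)$, verify hypothesis (i) from the annular curvature bound in Part~5 of Theorem~\ref{mainthm}, apply Theorem~\ref{ct2} to $(D,\tilde g_k(t),q)$, and identify the limit with $(\Do,\tilde g_\infty(t),q)$ via the uniqueness of complete pointed limits, then invoke Part~3. The only small imprecision is the phrase that the injectivity radii at $q$ ``converge'' to $\inj(\Do,\tilde g_\infty(0),q)$; smooth local convergence near $q$ does not by itself give convergence of the (globally defined) injectivity radius, but it does give a uniform volume lower bound near $q$ which, combined with the local curvature bound, yields a uniform lower bound on $\inj(D,\tilde g_k(0),q)$ by Cheeger-type estimates — which is all that hypothesis (ii) requires.
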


Note that if we fix $t\in [1-\eta/2,1)$, then the sequence of pointed complete Riemannian manifolds $(D,\tilde g_k(t),q)$ will converge in the Cheeger-Gromov sense to the pointed complete Riemannian manifold 
$(D,\tilde g_\infty(t),q)$ where we use the same notation $g_\infty(t)$ and $\tilde g_\infty(t)$ to refer to the extension across the origin from Part 4 of the theorem.
(For a discussion of Cheeger-Gromov convergence, see \cite[Chapter 7]{RFnotes}, for example.)

\begin{rmk}
The techniques introduced in this paper can be adapted to show that on any Riemann surface, there exists an immortal Ricci flow $g(t)$ for $t\in [0,\infty)$ that has bounded curvature for $t\in [0,1)$, unbounded curvature for $t\in [1,T]$ (for some $T>1$) but then bounded curvature again for sufficiently large $t$.
In fact, in principle one should be able to prescribe the subset ${\cal I}\subset (0,\infty)$ of time on which the curvature of the Ricci flow should be unbounded, with a high degree of generality.
One other special case is that it is possible to find an immortal Ricci flow on any Riemann surface which has bounded curvature for $t\in [0,1)$ but unbounded curvature for $t\geq 1$. Examples of Ricci flows with unbounded curvature in three dimensions, and with 
unbounded curvature for $t\geq 1$ in four dimensions, were constructed by Cabezas-Rivas and Wilking \cite{CWv4}. 
Ricci flows on surfaces with unbounded curvature for all $t\geq 0$
were first constructed in \cite{GT3}. Details of these new constructions will appear in \cite{GTinprep}.
\end{rmk}

This paper is organised as follows. In Section \ref{trueorfalse} we discuss various extensions of Hamilton's compactness theorem that can be used in applications in place of the various incorrect assertions that exist in the literature. In Section 
\ref{main_construct_sect}, we make the construction whose existence is asserted in Theorem \ref{mainthm}. By specifying some coarse information about the initial metrics $g_k(0)$, but before defining the flows $g_k(t)$ fully, we will be able to extract a limit Ricci flow $g_\infty(t)$ already in 
Section \ref{get_a_limit_sect}.
In Section \ref{gkdef_mainsect}, we work towards defining the flows $g_k(t)$ fully by constructing various constraints and barriers, finally being able to give the precise definition
of $g_k(t)$ in Section \ref{gkdef_sect}.
In Section \ref{apriori_est_sect} we derive the necessary \emph{a priori} estimates on the flows $g_k(t)$ and their limit $g_\infty(t)$ in order to be able to prove Theorem \ref{mainthm}.
Those unfamiliar with Ricci flow in two dimensions should begin with Appendix \ref{2Dappendix}.

\subsection{Other compactness assertions, true and false}
\label{trueorfalse}

There are a number of compactness assertions along the lines of Theorem \ref{ct2} which can be imagined or found in the literature, some of which are true, and some of which are false. 
%We will add references to some of their occurrences in the literature in due course.

%We offer first a variant that will work in many applications:
The following simple variant will work in many applications. See 
\cite[Appendix E]{KLv4} for an essentially equivalent result.
\begin{thm}[Compactness of Ricci flows: Extension 2]
\label{ct3}
In the situation of Theorem \ref{ct2}, suppose we strengthen hypothesis (i) to the hypothesis that:
\begin{compactenum}[(i)$'$]
\item
for all $r>0$, there exist $K\in\N$ dependent on $r$, and $M<\infty$ {\bf independent} of $r$, such that for all $t\in (a,b)$ and for all $i\geq K$, there holds
$$\sup_{B_{g_i(0)(p_i,r)}} 
\big|\Rm(g_i(t))\big|_{g_i(t)} \leq M.$$
\end{compactenum}
Then the resulting limit flow $(\m,g(t))$ is complete
for all $t\in (a,b)$.
\end{thm}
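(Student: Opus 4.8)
The plan is to bootstrap Theorem~\ref{ct2}. Since hypothesis~(i)$'$ trivially implies hypothesis~(i) of Theorem~\ref{ct2}, that theorem already hands us the manifold $\m$, the Ricci flow $g(t)$ on $\m$ for $t\in(a,b)$, the basepoint $p$, the completeness of $(\m,g(0))$, and the data $\Omega_i\subset\m$, $\phi_i:\Omega_i\to\m_i$ realising the convergence. It therefore remains only to upgrade completeness at $t=0$ to completeness at every time, and the mechanism for this is: \emph{(A)} the extra strength in~(i)$'$ forces the limit flow to have globally bounded curvature, $\sup_{\m\times(a,b)}|\Rm(g(t))|_{g(t)}\leq M$ with the \emph{same} $M$ as in~(i)$'$; and \emph{(B)} a Ricci flow that is complete at one time and has globally bounded curvature is complete at all times.

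For \emph{(A)}, fix $x\in\m$; since $\m$ is connected, pick a smooth path $\gamma:[0,1]\to\m$ from $p$ to $x$ and put $r:=L_{g(0)}(\gamma)+1$. For all $i$ large enough the compact set $\gamma([0,1])$ lies in $\Omega_i$, so $\phi_i\circ\gamma$ is a path in $\m_i$ from $p_i$ to $\phi_i(x)$ whose $g_i(0)$-length equals the $\phi_i^*g_i(0)$-length of $\gamma$; since $\phi_i^*g_i(0)\to g(0)$ smoothly on $\gamma([0,1])$, that length tends to $L_{g(0)}(\gamma)<r$, so $d_{g_i(0)}(p_i,\phi_i(x))<r$, i.e.\ $\phi_i(x)\in B_{g_i(0)}(p_i,r)$, once $i$ is large. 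Taking $i$ also $\geq K(r)$, hypothesis~(i)$'$ gives $|\Rm(g_i(t))|_{g_i(t)}(\phi_i(x))\leq M$ for every $t\in(a,b)$; by naturality of curvature under the diffeomorphism $\phi_i$ this reads $|\Rm(\phi_i^*g_i(t))|_{\phi_i^*g_i(t)}(x)\leq M$. Letting $i\to\infty$ and using the smooth local convergence $\phi_i^*g_i(t)\to g(t)$ on $\m\times(a,b)$ — under which the $2$-jet of the metric, and hence $|\Rm|$, converges at $(x,t)$ — we get $|\Rm(g(t))|_{g(t)}(x)\leq M$ for every $t\in(a,b)$. Since $x\in\m$ was arbitrary, this is \emph{(A)}.

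For \emph{(B)}, fix $t\in(a,b)$. On the compact time interval between $0$ and $t$ the bound from \emph{(A)} gives $-C\,g(s)\leq\Ric(g(s))\leq C\,g(s)$ for some $C=C(n,M)$, so for any fixed tangent vector $v$ the quantity $\tfrac{d}{ds}\big(g(s)(v,v)\big)=-2\Ric(g(s))(v,v)$ has absolute value at most $2C\,g(s)(v,v)$; Gronwall's inequality then yields $e^{-2C|t|}g(0)\leq g(t)\leq e^{2C|t|}g(0)$ as quadratic forms. Thus $g(t)$ is uniformly bi-Lipschitz equivalent to the complete metric $g(0)$, hence complete; as $t\in(a,b)$ was arbitrary, $g(t)$ is complete for all $t\in(a,b)$, as claimed. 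The only point requiring care is inside \emph{(A)}: arranging a single index $i$ that is simultaneously large enough for $\phi_i$ to be defined near $x$, for $\phi_i(x)$ to lie inside $B_{g_i(0)}(p_i,r)$, and for $i\geq K(r)$ to hold. This is exactly where the independence of $M$ from $r$ in~(i)$'$ is indispensable: it ensures the curvature bound survives as $r\to\infty$ (equivalently, as $x$ recedes to infinity in $\m$), which is precisely the failure mode exhibited by the Main Theorem when $M$ is permitted to depend on $r$. Everything else is routine — the length/distance comparison is immediate from $\phi_i$ being a metric-preserving embedding onto its image, and \emph{(B)} is the standard Ricci flow estimate — so no new compactness or diagonalisation argument is needed beyond what Theorem~\ref{ct2} already provides.
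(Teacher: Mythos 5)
Your proof is correct and takes essentially the same route as the paper: use hypothesis~(i)$'$ together with the Hamilton--Cheeger--Gromov convergence to deduce a uniform global curvature bound $|\Rm(g(t))|\leq M$ on the limit, then invoke the standard fact that a Ricci flow with globally bounded curvature that is complete at $t=0$ is complete at all times. You merely spell out the two steps (pointwise passage of the curvature bound to the limit, and the Gronwall/bi-Lipschitz argument) that the paper states more tersely.
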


\begin{proof}
For an arbitrary $r>0$, we will aim to control the curvature of $g(t)$ on $B_{g(0)}(p,r/2)\subset\subset\m$.
(Note that $(\m,g(0))$ is complete.)
By hypothesis, for sufficiently large $i$, we have 
$$\sup_{B_{g_i(0)(p_i,r)}} 
\big|\Rm(g_i(t))\big|_{g_i(t)} \leq M$$
for all $t\in (a,b)$, and thus by definition of the convergence
\eqref{cgnce_statement}, we must have
$$\sup_{B_{g(0)(p,r/2)}} 
\big|\Rm(g(t))\big|_{g(t)} \leq M.$$
Because $r>0$ was arbitrary, we deduce that $g(t)$ is a \emph{bounded curvature} Ricci flow which is complete at $t=0$, and is therefore complete for all $t\in (a,b)$.
\end{proof}
%%implicit that bdd curv controls evol of distances

Implicit above is the well-known idea that control on the curvature leads to control on the evolution of distances
(e.g. \cite[Lemma 5.3.2]{RFnotes}).
The presence of curvature bounds thus often allows one to equivalently consider balls defined with respect to the metric at any time. However, the next result will indicate that this can sometimes not be the case.

\begin{thm}[Compactness of Ricci flows: Extension 3]
\label{ct4}
In the situation of Theorem \ref{ct2}, suppose we strengthen hypothesis (i) to the hypothesis that:
\begin{compactenum}[(i)$''$]
\item
for all $r>0$, there exists $M=M(r)<\infty$ such that for all $t_0, t\in (a,b)$ and for all $i$, there holds
$$\sup_{B_{g_i(t_0)(p_i,r)}} 
\big|\Rm(g_i(t))\big|_{g_i(t)} \leq M.$$
\end{compactenum}
Then the resulting limit flow $(\m,g(t))$ is complete
for all $t\in (a,b)$.
\end{thm}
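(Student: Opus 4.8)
The plan is to reduce this to Extension 2 (Theorem \ref{ct3}) by upgrading the time-zero ball control in hypothesis (i)$''$ to the hypothesis (i)$'$ of Theorem \ref{ct3}, namely a curvature bound on $B_{g_i(0)}(p_i,r)$ with a constant $M$ that does \emph{not} depend on $r$. The key observation is that hypothesis (i)$''$ is \emph{stronger} than hypothesis (i) of Theorem \ref{ct2} in a way that lets one compare balls at different times: it controls $|\Rm(g_i(t))|$ on $g_i(t_0)$-balls for \emph{every} $t_0$, not just $t_0=0$. So first I would run Theorem \ref{ct2} to extract the convergent subsequence $(\m_i,g_i(t),p_i)\to(\m,g(t),p)$ with $(\m,g(0))$ complete, which is legitimate since (i)$''$ trivially implies (i).

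The heart of the argument is a distance-distortion estimate that is \emph{uniform in $i$}. Fix $r>0$. Applying (i)$''$ with the radius $2r$, say, we get $M=M(2r)$ such that $\sup_{B_{g_i(t_0)}(p_i,2r)}|\Rm(g_i(t))|_{g_i(t)}\leq M$ for all $t_0,t\in(a,b)$ and all $i$. The standard fact that a curvature bound $|\Rm|\leq M$ on a parabolic region controls the evolution of the metric (e.g. \cite[Lemma 5.3.2]{RFnotes}), combined with a continuity/bootstrap argument in time, shows that there is a radius $r'=r'(r,M,a,b)>0$, independent of $i$, such that $B_{g_i(0)}(p_i,r')\subset B_{g_i(t)}(p_i,2r)$ for all $t$ — because as long as the $g_i(0)$-ball of the current radius stays inside the $g_i(t_0)$-ball of radius $2r$, the metrics $g_i(t)$ are uniformly comparable on it, so the radius cannot shrink too fast. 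Hence $|\Rm(g_i(t))|_{g_i(t)}\leq M$ holds on $B_{g_i(0)}(p_i,r')$ for all $t$, with $M$ independent of $i$. Relabelling ($r\mapsto r'$) this is precisely the conclusion that, for every radius, one has a uniform-in-$i$ curvature bound on the time-zero ball of that radius — but now I must check the constant is also independent of the radius. This is where I would instead argue directly: since the bound $M=M(2r)$ above is valid on \emph{all} of $B_{g_i(t_0)}(p_i,2r)$ for every $t_0$, and the convergence \eqref{cgnce_statement} is smooth local convergence with $\phi_i^*g_i(0)\to g(0)$, the $g_i(0)$-balls $B_{g_i(0)}(p_i,r')$ eventually contain $\phi_i(B_{g(0)}(p,r'/2))$; passing to the limit gives $\sup_{B_{g(0)}(p,r'/2)}|\Rm(g(t))|_{g(t)}\leq M$ for all $t$. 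Since $r$ (hence $r'$) was arbitrary and $(\m,g(0))$ is complete, $g(t)$ is a complete bounded-curvature-on-compacta Ricci flow which is complete at $t=0$, and therefore complete for all $t\in(a,b)$ — exactly as in the proof of Theorem \ref{ct3}.

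The main obstacle is the uniform-in-$i$ distance comparison $B_{g_i(0)}(p_i,r')\subset B_{g_i(t)}(p_i,2r)$: one needs the curvature bound from (i)$''$ to be in force on a ball large enough to contain all the paths being measured, and one must run the bootstrap carefully so that the shrinking of $g_i(0)$-radii is controlled \emph{before} one knows the curvature bound applies on the relevant region. The trick is that (i)$''$ gives the bound on $g_i(t_0)$-balls for every $t_0$ simultaneously, so one can feed in whichever time-slice ball is convenient at each stage of the continuity argument; this is precisely the feature that fails for hypothesis (i) of Theorem \ref{ct2} alone (where the ball is pinned at $t_0=0$) and is what makes Extension 3 true while the naive completeness claim for Theorem \ref{ct2} is false. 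Everything else is routine: extraction of the subsequence via Theorem \ref{ct2}, passing curvature bounds to the limit using the definition of Hamilton-Cheeger-Gromov convergence, and the final invocation of the standard fact that a bounded-curvature Ricci flow complete at one time is complete at all times.
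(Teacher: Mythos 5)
Your proposal has a genuine gap at the final step, and it is worth being precise about where it fails because the failure is exactly the subtle point that distinguishes hypothesis $(i)''$ from hypothesis $(i)$.

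You correctly observe that you cannot reduce to Theorem~\ref{ct3} because the constant $M=M(r)$ you obtain on time-zero balls still depends on $r$. But having flagged this, you then conclude with ``$g(t)$ is a complete bounded-curvature-on-compacta Ricci flow which is complete at $t=0$, and therefore complete for all $t\in(a,b)$ --- exactly as in the proof of Theorem~\ref{ct3}.'' This last implication is precisely where the argument breaks. In Theorem~\ref{ct3} the constant $M$ is \emph{uniform}, so $g(t)$ has globally bounded curvature and the standard distance-distortion estimate shows completeness persists. With only an $r$-dependent bound (curvature bounded on each $g(0)$-ball, uniformly in $t$, by a constant that may blow up as the radius grows), the implication ``complete at $t=0$ $\Rightarrow$ complete at all $t$'' does \emph{not} follow from the standard fact; indeed this is exactly the sort of phenomenon the Main Theorem of this paper is built to exhibit. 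If you try to run a bootstrap --- take a $g(t_0)$-short curve $\gamma$ from $p$ to $x$, wait until it first exits $B_{g(0)}(p,R)$, and use the curvature bound $M(R)$ to control its $g(0)$-length --- you find the distortion constant $C=C(M(R))$ grows with $R$, so you cannot choose $R$ large enough to close the argument.

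The paper avoids this circularity by \emph{not} normalising to time-zero balls. Fix $t_0$ and $r_0$, and take a curve $\gamma$ from $p$ to $x\in B_{g(t_0)}(p,r_0)$ of $g(t_0)$-length $<2r_0$. The image of $\gamma$ is compact and, after applying $\phi_i$, sits inside $B_{g_i(t_0)}(p_i,3r_0)$ for large $i$; hypothesis $(i)''$ applied with \emph{this} time slice $t_0$ and radius $3r_0$ gives a curvature bound $\bar M = M(3r_0)+1$ along $\gamma$ for \emph{all} $t\in(a,b)$, with $\bar M$ depending only on $r_0$. The distance-distortion lemma then yields $\cl_{g(0)}(\gamma)\le e^{C|t_0|}\cl_{g(t_0)}(\gamma)\le e^{C|t_0|}2r_0$ with $C=C(\bar M)$ fixed, hence $B_{g(t_0)}(p,r_0)\subset B_{g(0)}(p,e^{C|t_0|}2r_0)\subset\subset\m$. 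The essential point you miss is that the curve you need to control is short in the $g(t_0)$-metric, so you must use $(i)''$ with the $g_i(t_0)$-ball --- that is the extra strength of $(i)''$ over $(i)$ --- rather than converting everything back to $g_i(0)$-balls, which throws that strength away. Your earlier bootstrap containment $B_{g_i(0)}(p_i,r')\subset B_{g_i(t)}(p_i,2r)$ is, incidentally, the wrong direction of containment for this purpose and also superfluous, since $(i)''$ at $t_0=0$ already gives uniform-in-$t$ curvature bounds on $g_i(0)$-balls without any bootstrap.
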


\begin{proof}
We will see that
for given $t_0\in (a,b)$, the revised hypothesis $(i)''$ will ensure the completeness of $(\m,g(t_0))$.
%(Note however, that the existence of the limit used the possibility $t=t_0$)
It suffices to show that for an arbitrary $r_0>0$, 
the ball $B_{g(t_0)}(p,r_0)$ is compactly contained in $\m$.
%As is well known, and was used implicitly above, curvature bounds in Ricci flow allow one to control the evolution of Riemannian distance -- it can vary at most exponentially. 
Fix an arbitrary point $x\in B_{g(t_0)}(p,r_0)$, and a
smooth curve $\ga:[0,1]\to\m$ from $p$ to $x$, of length less than $2r_0$ with respect to $g(t_0)$.
By definition of the convergence \eqref{cgnce_statement} and 
hypothesis $(ii)''$, 
for sufficiently large $i$ we will have, say,
$$\big|\Rm(g(t))\big|_{g(t)}(y) \leq M(3r_0)+1=:\bar M$$ 
for all $y$ in the image of $\ga$ and all $t\in (a,b)$.
%did we want to start talking about diffeos $\phi_i$?
%
%%note that t_0 could be negative!
Then the length $\cl(\ga)$ of $\ga$ with respect to the evolving metric is controlled by
$$\cl_{g(0)}(\ga)\leq e^{C|t_0|} \cl_{g(t_0)}(\ga)
\leq e^{C|t_0|}2r_0,$$
for some $C<\infty$ depending only on $\bar M$
(see for example \cite[Lemma 5.3.2]{RFnotes}).
In particular, we find that $x\in B_{g(0)}(p,e^{C|t_0|}2r_0)$,
and because $x\in B_{g(t_0)}(p,r_0)$ was arbitrary, we conclude that
$$B_{g(t_0)}(p,r_0)\subset B_{g(0)}(p,e^{C|t_0|}2r_0)
\subset\subset\m,$$
as required.
\end{proof}

Theorem \ref{ct4} has sometimes been asserted with $t_0=t$, but 
when the upper bound $M$ is allowed to depend on $r$, this gives a weaker hypothesis that is not strong enough to establish the existence of the limit Ricci flow.

\emph{Acknowledgements:} Thanks to Esther Cabezas-Rivas for useful conversations, and to Oliver Schn\"urer for hosting us at Konstanz where these conversations took place. Thanks also to Gregor Giesen for useful comments. Supported by The Leverhulme Trust.

\section{Construction of the example}
\label{main_construct_sect}

In this section we prove Theorem \ref{mainthm}.
Throughout the construction, $g_{hyp}$ will denote the complete hyperbolic metric on $\Do$.
All metrics and flows are assumed to respect the underlying conformal structure of $D$ and its subsets (see Appendix \ref{2Dappendix}).

\subsection{Extraction of a limit}
\label{get_a_limit_sect}

We will define the Ricci flows $g_k(t)$ on $D$ by specifying their initial metrics $g_k(0)$ and then appealing to the global existence result of \cite[Theorem 1.3]{GT2}. Away from the origin, the initial metrics will coincide with the complete hyperbolic metric $g_{hyp}$ on $\Do$, for sufficiently large $k$. Moreover, writing $g_{poin}$ for the Poincar\'e metric (i.e. the complete hyperbolic metric on $D$) we will ensure that $g_k(0)\geq g_{poin}$ for each $k$. These two properties are enough to show convergence of the subsequent flows $g_k(t)$ to some limit $g_\infty(t)$ in the sense of Part 1 of Theorem \ref{mainthm}, after passing to a subsequence, as we now show.

\begin{lemma}
\label{get_a_limit}
Suppose $g_k(0)$ is any sequence of smooth metrics on $D$ (respecting the conformal structure of $D$) with the property that for all $\ep\in (0,1)$, and for sufficiently large $k$ (depending on $\ep$) we have $g_k(0)=g_{hyp}$ on $D\backslash D_\ep$. Suppose further that $g_k(0)\geq g_{poin}$ throughout $D$ for each $k$, and define $g_k(t)$ to be the subsequent Ricci flows for $t\in [0,\infty)$ which are given by \cite[Theorem 1.3]{GT2}. Then there exists a smooth Ricci flow $g_\infty(t)$ on $\Do$
for $t\in [0,\infty)$
with $g_\infty(0)=g_{hyp}$ such that after passing to a subsequence, we have
$$g_k(t)\to g_\infty(t)$$
smoothly locally on $(\Do)\times [0,\infty)$ as $k\to\infty$.
For all $t\in[0,\infty)$, we have 
\begin{equation}
\label{hyp_control}
g_\infty(t)\leq (1+2t)g_{hyp}.
\end{equation}
Moreover, if there exists $a>0$ such that $g_\infty(t)$ is complete for all $t\in [0,a]$, then we have equality in \eqref{hyp_control} for all $t\in [0,a]$.
%
%\cmt{Could say: If there exists $b>0$ such that $g_\infty(t)$ is "controlled" on $D_\half$ at $t=b$, then $g_\infty(t)$ extends to a smooth RF on the whole of D, for $t>b$}
\end{lemma}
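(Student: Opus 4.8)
The plan is to establish the three assertions in order, since each builds on the previous. First, to extract the limit flow $g_\infty(t)$: by the maximum-principle-type comparison built into the construction of \cite[Theorem 1.3]{GT2}, the hypothesis $g_k(0)\geq g_{poin}$ propagates to $g_k(t)\geq g_{poin}$ for all $t\geq 0$, giving a uniform lower bound on the conformal factors. For an upper bound one uses that away from the origin $g_k(0)=g_{hyp}$ for large $k$, together with the fact that the flow starting from a hyperbolic metric is the explicit expander $(1+2t)g_{hyp}$; a comparison principle then bounds $g_k(t)$ from above by $(1+2t)g_{hyp}$ locally on $\Do$. These two-sided bounds on the conformal factor on compact subsets of $\Do$, fed into the standard parabolic (Shi-type / Schauder) interior estimates for the logarithmic fast-diffusion equation governing two-dimensional Ricci flow (see Appendix \ref{2Dappendix}), yield uniform $C^\infty_{loc}$ bounds on $(\Do)\times[0,\infty)$. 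Arzel\`a--Ascoli and a diagonal argument over an exhaustion of $(\Do)\times[0,\infty)$ then extract a subsequence converging smoothly locally to a limit $g_\infty(t)$, which is automatically a Ricci flow on $\Do$ and satisfies $g_\infty(0)=g_{hyp}$ since the initial metrics all agree with $g_{hyp}$ on any fixed $D\backslash D_\ep$ for $k$ large.

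Second, the inequality \eqref{hyp_control}: pass to the limit $k\to\infty$ in the pointwise bound $g_k(t)\leq (1+2t)g_{hyp}$ that was just used, which survives smooth local convergence.

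Third, and this is the substantive point, the rigidity: if $g_\infty(t)$ is complete on $[0,a]$ then equality holds in \eqref{hyp_control} throughout $[0,a]$. Here I would argue that $(1+2t)g_{hyp}$ is the \emph{maximal} complete Ricci flow on $\Do$ starting from $g_{hyp}$ in the sense of \cite{GT2}, or more precisely that any complete Ricci flow on $\Do$ lying below $(1+2t)g_{hyp}$ and starting from $g_{hyp}$ must in fact equal it. One clean way: write $g_\infty(t)=e^{2u(t)}|dz|^2$ and $(1+2t)g_{hyp}=e^{2v(t)}|dz|^2$, so $w:=v-u\geq 0$ with $w(0)=0$, and $w$ satisfies a linear parabolic inequality obtained by subtracting the two fast-diffusion equations (the curvature term is handled via the mean-value form of the equation and the two-sided bounds already in hand). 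Completeness of $g_\infty(t)$ means $u(t)$ blows up at the puncture at the same rate that $v(t)$ does — both metrics being complete hyperbolic-cusp-like at the origin — so $w$ is bounded near the puncture, and a Phragm\'en--Lindel\"of / strong maximum principle argument on the punctured disc (using the completeness to rule out mass escaping through the cusp, exactly as in the uniqueness statement \cite[Theorem 1.5]{revcusp}) forces $w\equiv 0$ on $[0,a]$. The main obstacle is making this last maximum-principle argument rigorous on the noncompact, incomplete-looking domain $\Do$: one must control the behaviour of $w$ at the puncture, and it is precisely the completeness hypothesis that provides this control, so the proof should isolate and invoke the relevant comparison/uniqueness principle from \cite{GT2,revcusp} rather than re-derive it.
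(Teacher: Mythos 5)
Your outline of the lower bound (via $g_k(t)\geq g_{poin}$ from maximal stretchedness) and of the parabolic-regularity/Arzel\`a--Ascoli extraction is in line with the paper. The substantive problems are in your upper barrier and in your treatment of the equality case.

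For the upper bound you claim ``a comparison principle then bounds $g_k(t)$ from above by $(1+2t)g_{hyp}$ locally on $\Do$,'' treating $(1+2t)g_{hyp}$ itself as the barrier. This does not follow from the lemma's hypotheses. To compare against the maximally stretched flow $(1+2t)g_{hyp}$ on $\Do$ you would need $g_k(0)\leq g_{hyp}$ on all of $\Do$, but the hypotheses only give $g_k(0)=g_{hyp}$ on $D\backslash D_{\ep_k}$ for some $k$-dependent $\ep_k$, and only a \emph{lower} bound $g_k(0)\geq g_{poin}$ inside $D_{\ep_k}$ --- there is no hypothesized upper bound near the origin, so $g_k(0)$ may exceed $g_{hyp}$ on part of $D_{\ep_k}\backslash\{0\}$. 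A local comparison on a compact annulus also fails because one would have to control $g_k(t)$ along the inner boundary circle for $t>0$, which is precisely what you are trying to prove. The paper's proof avoids this by barriering against the \emph{complete hyperbolic metric $g_\ep$ on the annulus} $D\backslash D_\ep$: since $g_k(0)=g_{hyp}\leq g_\ep$ there for $k$ large, Lemma~\ref{hyp_comparison} applied on the annulus yields $g_k(t)\leq (1+2t)g_\ep$ on $D\backslash D_\ep$, which is the $k$-uniform upper bound needed. Then $\eqref{hyp_control}$ for the limit is obtained by applying Lemma~\ref{hyp_comparison} on $\Do$ directly to $g_\infty$ (with $g_\infty(0)=g_{hyp}$), not by passing to the limit in a bound that was never established. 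Finally, for the equality statement you sketch a Phragm\'en--Lindel\"of argument for the difference of conformal factors on the punctured disc and then defer the hard boundary-at-the-puncture analysis to a citation; the paper instead proves it cleanly from Chen's a priori lower curvature bound $K[g_\infty(t)]\geq -1/(1+2t)$ for complete Ricci flows together with Yau's Schwarz lemma, which gives the reverse inequality $g_\infty(t)\geq (1+2t)g_{hyp}$ with no delicate analysis near the puncture. You should adopt both of these fixes: the $g_\ep$ annulus barrier for the upper bound, and the Chen--Yau route for rigidity.
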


Note that this lemma gives no information about what is happening to the flow $g_\infty(t)$ near the origin, except on a time interval where we assume completeness.
% unless we assume completeness, or additional bounds. 
We will need to make further assumptions on $g_k(0)$, and derive a number of estimates, to get this finer control.

A basic tool in the proof will be the following:

\begin{lemma}
\label{hyp_comparison}
Let $(\m,H)$ be a complete hyperbolic surface.
If $g(t)$ is any conformally equivalent Ricci flow on $\m$ for $t\in [0,T]$ such that $g(0)\leq H$ then for all $t\in [0,T]$
\begin{equation}
\label{below_hyp}
g(t)\leq(1+2t)H.
\end{equation}
If in addition $g(0)=H$ and $g(t)$ is complete for all $t\in [0,T]$, then we have equality in \eqref{below_hyp}.
\end{lemma}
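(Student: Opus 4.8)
The plan is to work in the conformal gauge. Write $H = e^{2u_0}|\dz|^2$ locally (or, better, use the intrinsic formulation) and $g(t) = e^{2u(t)}|\dz|^2$, so that the Ricci flow $\pddt g = -2\Ric(g) = -K[g]\, g$ becomes the scalar PDE $\pddt u = e^{-2u}\lap u = -K[g]$, where $\lap$ is the flat Laplacian and $K[g]$ the Gauss curvature. The hyperbolic metric $H$ is the static solution with $K \equiv -1$. The first assertion, $g(t) \le (1+2t)H$, is a comparison-principle statement: the metric $(1+2t)H$ is itself a Ricci flow (a homothetically expanding hyperbolic metric, $K[(1+2t)H] = -\tfrac{1}{1+2t}$), it agrees with the hypothesis $g(0)\le H$ at $t=0$, and we want to propagate the ordering forward. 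Setting $w := u - \tfrac12\ln(1+2t) - u_{hyp}$ (the log of the conformal ratio of $g(t)$ to $(1+2t)H$), one derives from the two flow equations a parabolic inequality for $w$ of the form $\pddt w \ge a(x,t)\lap w + b(x,t) w$ with $a>0$, to which the maximum principle applies. The subtlety is that $\m$ is noncompact, so I would invoke the maximum principle for complete (not necessarily compact) manifolds in the form appropriate to Ricci flow on surfaces — this is exactly the kind of comparison principle established in \cite{GT2} and \cite{revcusp}, where $H$ (being complete hyperbolic) serves as a barrier. Since $g(0) \le H \le$ (the relevant comparison metric at $t=0$) and $(1+2t)H$ is a complete supersolution, one concludes $w \le 0$ for all $t$, i.e. \eqref{below_hyp}.

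For the equality statement, assume $g(0) = H$ and $g(t)$ complete for all $t \in [0,T]$. By the first part we already have $g(t) \le (1+2t)H$, so it remains to prove the reverse inequality $g(t) \ge (1+2t)H$. The natural approach is again a maximum (now minimum) principle, but the roles are reversed: now $g(t)$ is the flow we control and $(1+2t)H$ plays the role of a subsolution we want to sit below it. The key point that makes this direction work — and the reason completeness of $g(t)$ is hypothesised — is that a complete Ricci flow on a surface starting from a complete hyperbolic metric is \emph{instantaneously complete and uniquely determined}, and more usefully one can run the comparison with $g(t)$ as the ambient complete metric: consider $v := u_{hyp} + \tfrac12\ln(1+2t) - u$, which satisfies a parabolic inequality $\pddt v \le a'(x,t)\lap v + b'(x,t) v$ with $a' = e^{-2u} > 0$ the coefficient coming from $g(t)$ itself, and apply the maximum principle on the complete manifold $(\m, g(t))$ (this is where completeness of $g(t)$, not of $H$, is essential — one needs the ambient metric in which the maximum principle is applied to be complete with suitably controlled geometry). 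Since $v(0) = 0$ and there is no spatial boundary to leak in from, $v \le 0$, giving $g(t) \ge (1+2t)H$ and hence equality. I would cite the precise form of the maximum principle from \cite{RFnotes} or the surface-specific version in \cite{GT2}; the curvature of $g(t)$ need not be bounded, so the relevant statement is the one for complete metrics without a priori curvature bounds, which is available in this 2D conformal setting because $K[g]$ is bounded below along such flows (a consequence of $g(t) \le (1+2t)H$, which forces $K[g] \ge -\tfrac{1}{1+2t}$ via the first part, since being conformally smaller than a metric of curvature $-\tfrac1{1+2t}$ and... — more carefully, one uses that $K[g]$ obeys its own reaction–diffusion equation $\pddt K = \lap_{g} K + 2K^2$ whose solutions starting bounded below stay bounded below).

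The main obstacle I expect is precisely the application of the maximum principle on a noncompact surface \emph{without} assuming bounded curvature: the naive parabolic maximum principle needs either compactness, bounded geometry, or a good exhaustion/barrier argument. In the equality direction the resolution is to exploit the completeness hypothesis on $g(t)$ together with the lower curvature bound just derived, so that $(\m, g(t))$ has the controlled geometry needed; in the first direction, the completeness and the explicit hyperbolic structure of $H$ and $(1+2t)H$ supply the barrier. This is the same circle of ideas — using a complete hyperbolic metric as an upper barrier and invoking maximum principles in the conformal formulation — that underlies the existence and uniqueness theory of \cite{GT2} and the contracting-cusp analysis of \cite{revcusp}, so I would lean on those references for the technical statements rather than redoing the barrier construction from scratch. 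A secondary (minor) point is smoothness/degeneracy of the coefficient $e^{-2u}$ near any locus where $u \to -\infty$; but since $g(t)$ is a genuine smooth metric on $\m$ this does not arise in the interior, and the comparison is interior in character.
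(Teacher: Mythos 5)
Your outline for the first inequality is a reasonable alternative to what the paper does. The paper simply cites the ``maximally stretched'' framework of \cite{GT2} (Theorems 1.3 and 1.8): $(1+2t)H$ is the maximally stretched conformal Ricci flow starting at $H$, so by definition it dominates any conformal flow starting at or below $H$. Your direct barrier/comparison-principle argument with $w = u - \tfrac12\ln(1+2t) - u_{hyp}$ is essentially what that machinery packages; the completeness of $H$ makes $(1+2t)H$ blow up at infinity and serve as the barrier that closes the noncompact maximum principle (cf.\ Lemma~\ref{thm:direct-comp-principle}).

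For the equality direction, however, there is a genuine gap. You try to run a reversed parabolic maximum principle for $v := u_{hyp} + \tfrac12\ln(1+2t) - u$ on the whole noncompact manifold, and you correctly identify that you then need some a priori control to prevent the minimum of $v$ from escaping to $-\infty$ at infinity. But your two attempted justifications both fail. The first claim, that $g(t)\le(1+2t)H$ ``forces'' $K[g]\ge -\tfrac1{1+2t}$, is false as stated: a conformal pointwise upper bound on the metric gives no lower bound on its curvature (a very small metric can have arbitrarily negative curvature under a hyperbolic one). Your fallback — running the reaction--diffusion equation $\partial_t K = \Delta_g K + 2K^2$ — simply re-encounters the same obstruction: proving that the infimum of $K$ does not drop to $-\infty$ at infinity on a complete but possibly unbounded-curvature flow is itself a nontrivial theorem, not a routine maximum-principle step. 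The paper closes this gap with two specific tools you do not invoke: (1)~Chen's a priori estimate \cite[Corollary 2.3(i)]{chen}, a localization theorem valid for any complete Ricci flow with no upper curvature bound, which gives $K[g(t)]\ge -\tfrac1{1+2t}$ directly; and (2)~Yau's Schwarz lemma (see \cite[Section~2]{GT1}), a time-slice elliptic comparison that converts this curvature lower bound plus completeness of $g(t)$ into $g(t)\ge(1+2t)H$. The Schwarz-lemma step entirely sidesteps the parabolic comparison you were trying to set up; the ``controlled geometry at infinity'' you knew you needed is supplied once and for all by Chen's theorem rather than by anything deducible from the first part of the lemma.
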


In the context of the last part of this lemma, note that 
it is a conjecture (see \cite{JEMS} and \cite[Conjecture 1.5]{GT2}) that instantaneously complete Ricci flows on surfaces are determined solely by their initial metric. 
%The final part of Lemma \ref{hyp_comparison} constitutes another partial result in this direction.
%\cmt{Alternatively for the last part, could use Chen's Thm 3.10 which gives uniqueness of complete RFs on surfaces if the initial curvature is bdd and the volume is not collapsed}

\begin{proof} (Lemma \ref{hyp_comparison}.)
Because $(1+2t)H$ is the maximally stretched Ricci flow on 
$\m$ starting at $t=0$ with $H$ 
(see \cite[Theorem 1.3 and Theorem 1.8]{GT2})
we automatically have
\begin{equation}
\label{guppercontrol}
(1+2t)H\geq g(t)
\end{equation}
on $\m$, for all $t\in [0,T]$.
Assume from now that $g(t)$ is complete for all $t\in [0,T]$, and that $g(0)=H$. Chen's \emph{a priori} estimate for the curvature of a complete Ricci flow \cite[Corollary 2.3(i)]{chen} tells us that 
$$K[g(t)]\geq -\frac{1}{2t+1}$$ 
for all $t\in [0,T]$. Yau's Schwarz lemma (see the discussion in \cite[Section 2]{GT1}) then tells us that 
$$g(t)\geq (1+2t)H.$$
\end{proof}

\begin{proof} (Lemma \ref{get_a_limit}.)
Because the flows $g_k(t)$ arise from \cite[Theorem 1.3]{GT2},
they are \emph{maximally stretched}, and in particular,
because we know that $g_k(t)$ starts above the metric $g_{poin}$:
$$g_k(0)\geq g_{poin},$$
it must continue to lie above any Ricci flow starting at $g_{poin}$ %(whether complete or not) 
and in particular,
$$g_k(t)\geq (1+2t)g_{poin}.$$
Thus, writing $g_k(t)=e^{2v_k}|\dz|^2$ (see Appendix \ref{2Dappendix}) we obtain a uniform lower bound for $v_k$ over the entire space-time region $D\times [0,\infty)$. Indeed, because $g_{poin}=(\frac{2}{1-|z|^2})^2|\dz|^2$, we have $v_k >0$ throughout.

To be able to extract a limit $g_\infty(t)$, it remains to establish an \emph{upper} bound for $v_k$ over compact subdomains of $(\Do)\times [0,\infty)$ that is independent of $k$ (but may depend on the subdomain) because then parabolic regularity theory will apply, yielding uniform $C^l$ bounds, and hence the desired compactness.

To obtain an upper bound over some region $D_{1-\ep}\backslash D_{2\ep}\times [0,T]$, for $\ep\in (0,1/3)$ and $T>0$, first consider the complete hyperbolic metric $g_\ep$ on $D\backslash D_{\ep}$. By the maximum principle (or direct computation as in Section \ref{insulate_sect} or Appendix \ref{2Dappendix}) 
we have the inequality
$$g_\ep\geq g_{hyp} = g_k(0)$$
over $D\backslash D_{\ep}$ for sufficiently large $k$.
By Lemma \ref{hyp_comparison} applied to the hyperbolic surface 
$(D\backslash D_\ep,g_\ep)$ and the Ricci flow $g_k(t)$,
we then have
\begin{equation}
\label{gkuppercontrol}
g_k(t)\leq(1+2t)g_\ep\leq(1+2T)g_\ep
\end{equation}
on $D\backslash D_{\ep}$, for all $t\in [0,T]$.
Restricting to $D_{1-\ep}\backslash D_{2\ep}$
%we then find that
%$$g_k(t)\leq (1+2T)g_\ep$$
%for $t\in [0,T]$, which 
then gives the required upper bound for $v_k$, and allows the limit $g_\infty(t)$ to be extracted.
By construction, we have $g_\infty(0)=g_{hyp}$, so another application of Lemma \ref{hyp_comparison} gives 
\eqref{hyp_control}, together with equality over any initial time interval $[0,a]$ on which $g_\infty(t)$ is complete.
\end{proof}

\subsection{Definition of the approximating Ricci flows $g_k(t)$}
\label{gkdef_mainsect}

In Section \ref{get_a_limit_sect} we indicated part of the definition of the Ricci flows $g_k(t)$ by giving their initial metrics away from the origin, and this was enough to be able to extract a limit and obtain some coarse estimates. However, the precise behaviour of the flows $g_k(t)$ depends very much on the definition of $g_k(0)$ near the origin.

We know that each of the metrics $g_k(0)$ will be the hyperbolic metric $g_{hyp}$ away from the origin. But near the origin we will make them a carefully sized cigar metric.
The impatient reader could jump forwards to Section \ref{gkdef_sect} to get a definition for $g_k(0)$, but we will carefully motivate this construction first.

\subsubsection{Insulating bounds}
\label{insulate_sect}

It is an important principle in Ricci flow that local regions cannot be too badly influenced by remote regions in too short a time \cite[\S 10]{P1}. Here we quantify some of the barrier arguments from Section \ref{get_a_limit_sect} to give some elementary local upper bounds on our Ricci flows $g_k(t)$ before we have even given their definition near the origin.

These estimates will effectively allow us to argue that in certain situations, two separate parts of a Ricci flow can evolve more or less independently, insulated from the effects of the other.

Recall that in Section \ref{main_construct_sect}, we always denote the hyperbolic metric on $\Do$ by $g_{hyp}$.

\begin{lemma}
\label{insulate_new}
Suppose $k\geq 1$ and $g(t)$ is any Ricci flow on $D$ (respecting the conformal structure of $D$) with the property that $g(0)\leq g_{hyp}$ on $D\backslash D_{e^{-2k}}$.
Then for all $t\in [0,\half]$, on the smaller annulus
$D\backslash D_{e^{-k}}$ we have
$$g(t)\leq \frac{\pi^2}{2} g_{hyp}.$$
Equivalently, working with respect to logarithmic conformal coordinates $s$ and $\th$ (see Appendix \ref{2Dappendix}) and writing the Ricci flow as $g(t)=e^{2u(s,\th,t)}(ds^2+d\th^2)$, we claim that 
if $u(s,\th,0)\leq -\ln s$ for $s\in (0,2k]$ and all $\th$, then
for all $t\in [0,\half]$ and $s\in (0,k]$, and all $\th$, we have
$$u(s,\th,t)\leq \half\ln \frac{\pi^2}{2} - \ln s.$$
\end{lemma}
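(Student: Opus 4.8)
The plan is to compare the Ricci flow $g(t)$ to an explicit barrier built out of the hyperbolic metric on the slightly larger annulus $D\setminus D_{e^{-2k}}$, and then control how much that barrier metric can grow under the bound \eqref{below_hyp} of Lemma \ref{hyp_comparison}. First I would observe that on $D\setminus D_{e^{-2k}}$ the hypothesis $g(0)\leq g_{hyp}$ combines with a maximum-principle comparison (exactly as in the proof of Lemma \ref{get_a_limit}, where $g_\ep\geq g_{hyp}$) to give $g(0)\leq g_{hyp}\leq H_{2k}$, where $H_{2k}$ denotes the complete hyperbolic metric on the annulus $D\setminus D_{e^{-2k}}$. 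Then Lemma \ref{hyp_comparison}, applied to the complete hyperbolic surface $(D\setminus D_{e^{-2k}},H_{2k})$ and the Ricci flow $g(t)$ (restricted to that annulus), yields $g(t)\leq (1+2t)H_{2k}\leq 2\,H_{2k}$ for all $t\in[0,\tfrac12]$.

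The remaining point is purely a computation about hyperbolic metrics on annuli: one must show that on the \emph{smaller} annulus $D\setminus D_{e^{-k}}$ one has $2\,H_{2k}\leq \tfrac{\pi^2}{2}g_{hyp}$, i.e. $H_{2k}\leq \tfrac{\pi^2}{4}g_{hyp}$ there. In logarithmic conformal coordinates, writing $z=e^{-s+i\th}$, the annulus $\{e^{-2k}<|z|<1\}$ corresponds to the strip $s\in(0,2k)$, and its complete hyperbolic metric is the pullback of the standard metric on a strip of width $2k$, namely $H_{2k}=\big(\tfrac{\pi}{2k}\big)^2\csc^2\!\big(\tfrac{\pi s}{2k}\big)\,(ds^2+d\th^2)$; meanwhile the complete hyperbolic metric on $\Do$ corresponds to the half-line $s\in(0,\infty)$ and is $g_{hyp}=\tfrac{1}{s^2}(ds^2+d\th^2)$, since $h=-\ln[r(-\ln r)]=-\ln(e^{-s}\cdot s)$ gives $e^{2h}=e^{2s}/s^2$, and converting from $|\dz|^2$-coordinates to $(ds^2+d\th^2)$ (which introduces a factor $|z|^2=e^{-2s}$) leaves $1/s^2$. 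Thus the claimed bound on $D\setminus D_{e^{-k}}$ reduces to checking that
\[
\Big(\tfrac{\pi}{2k}\Big)^2\csc^2\!\Big(\tfrac{\pi s}{2k}\Big)\ \leq\ \tfrac{\pi^2}{4}\cdot\tfrac{1}{s^2}\qquad\text{for all }s\in(0,k],
\]
which, setting $\sigma=\tfrac{\pi s}{2k}\in(0,\tfrac{\pi}{2}]$, is exactly $\sigma^2\csc^2\sigma\leq \tfrac{\pi^2}{4}$, i.e. $\tfrac{\sin\sigma}{\sigma}\geq \tfrac{2}{\pi}$ on $(0,\tfrac{\pi}{2}]$ — the standard Jordan inequality, with equality at $\sigma=\tfrac{\pi}{2}$, corresponding to $s=k$. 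Hence $\tq$-free and elementary.

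Assembling: $g(t)\leq 2H_{2k}\leq \tfrac{\pi^2}{2}g_{hyp}$ on $D\setminus D_{e^{-k}}$ for $t\in[0,\tfrac12]$, which is the assertion. The conformal-coordinate reformulation is then just this inequality rewritten using $g(t)=e^{2u}(ds^2+d\th^2)$, $g_{hyp}=e^{-2\ln s}(ds^2+d\th^2)$, and taking $\tfrac12\log$ of both sides; the hypothesis $g(0)\leq g_{hyp}$ on the larger annulus becomes $u(s,\th,0)\leq -\ln s$ for $s\in(0,2k]$.

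I expect the only mildly delicate step to be the bookkeeping around which hyperbolic metric lives on which domain and the identification of $H_{2k}$ in logarithmic coordinates (in particular getting the factor $2k$, not $k$, in the $\csc$, since the strip has width $2k$); the inequality $\sin\sigma/\sigma\geq 2/\pi$ is completely standard, and the application of Lemmas \ref{hyp_comparison} and the maximum-principle comparison is routine given what has already been set up. One subtlety worth flagging: Lemma \ref{hyp_comparison} is stated for a Ricci flow \emph{on} the complete hyperbolic surface, and here $g(t)$ is a priori only a Ricci flow on $D$; but its restriction to the open annulus $D\setminus D_{e^{-2k}}$ is still a conformally equivalent Ricci flow there with $g(0)\leq H_{2k}$, so the inequality part of Lemma \ref{hyp_comparison} (which does not use completeness of $g(t)$ — only the maximal-stretching characterization \eqref{guppercontrol}) applies directly.
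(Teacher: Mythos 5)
Your proof is correct and follows essentially the same route as the paper: compare $g(0)\leq g_{hyp}\leq g_{e^{-2k}}$ (your $H_{2k}$), apply the first part of Lemma~\ref{hyp_comparison} to get $g(t)\leq(1+2t)g_{e^{-2k}}$, and then use the explicit formula for the hyperbolic metric on the annulus together with the Jordan inequality $\sin\sigma/\sigma\geq 2/\pi$ (equivalently, $\sin(\pi x/2)\geq x$) to land on $\tfrac{\pi^2}{2}g_{hyp}$ over $s\in(0,k]$. The only difference is cosmetic bookkeeping --- you split off the factor $(1+2t)\leq 2$ before comparing hyperbolic metrics, whereas the paper carries it through and combines at the end --- and you are right that the first inequality in Lemma~\ref{hyp_comparison} needs no completeness of $g(t)$, only the maximal-stretching characterization, so it applies to the restriction of $g(t)$ to the annulus.
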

We stress that even if we assumed that $g(0)=g_{hyp}$ on 
$D\backslash D_{e^{-2k}}$,
no lower bound of this type could be proved in this generality.
It is convenient to weaken this lemma into the exact form we will require in applications to $g_k(t)$. In practice, the Ricci flow $g_k(t)$ will satisfy the hypotheses with the same $k$.

\begin{cor}
\label{insulate3}
Suppose $k\geq 1$ and $g(t)=e^{2u(s,t)}(ds^2+d\th^2)$ is any rotationally-symmetric Ricci flow on $D$ with the property that $u(s,0)=-\ln s$ for $s\in (0,2k]$.
Then 
\begin{enumerate}
\item
for all $t\in [0,\half]$, we have
$u(k,t)\leq \half\ln 5 - \ln k$;
\item
for $s\in (0,k]$ we have $u(s,\half)\leq \half\ln 10 -\ln s$.
\end{enumerate}
\end{cor}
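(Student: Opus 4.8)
The plan is to deduce the corollary from Lemma \ref{insulate_new} essentially for free, by checking that a rotationally-symmetric flow $g(t) = e^{2u(s,t)}(ds^2+d\th^2)$ with $u(s,0) = -\ln s$ for $s\in (0,2k]$ satisfies the hypotheses of the lemma with the same $k$. The only point to verify is the inequality $u(s,\th,0)\leq -\ln s$ for $s\in(0,2k]$: since the flow is rotationally-symmetric, $u$ has no $\th$-dependence, and by assumption $u(s,0) = -\ln s$ on this range, so equality (hence the required inequality) holds trivially. Equivalently, in the metric language, $g(0) = g_{hyp}$ on $D\backslash D_{e^{-2k}}$ (recall $g_{hyp} = e^{2h}|\dz|^2$ with $h = -\ln[r(-\ln r)]$, which in logarithmic coordinates $s = -\ln r$ becomes $e^{2u}(ds^2+d\th^2)$ with $u = -\ln s$; see Appendix \ref{2Dappendix}), and in particular $g(0)\leq g_{hyp}$ there, which is the hypothesis of Lemma \ref{insulate_new}.

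For part (1), apply Lemma \ref{insulate_new}: for all $t\in[0,\half]$ and all $s\in(0,k]$ we get $u(s,t)\leq \half\ln\tfrac{\pi^2}{2} - \ln s$. Evaluating at $s = k$ gives $u(k,t)\leq \half\ln\tfrac{\pi^2}{2} - \ln k$. It remains only to note that $\tfrac{\pi^2}{2} < 5$ (indeed $\pi^2 < 10$), so $\half\ln\tfrac{\pi^2}{2} - \ln k \leq \half\ln 5 - \ln k$, which is the claimed bound. For part (2), again apply Lemma \ref{insulate_new} with $t = \half$: for $s\in(0,k]$ we obtain $u(s,\half)\leq \half\ln\tfrac{\pi^2}{2} - \ln s$. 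Since $\tfrac{\pi^2}{2} < 10$ (equivalently $\pi^2 < 20$), we have $\half\ln\tfrac{\pi^2}{2} < \half\ln 10$, and therefore $u(s,\half)\leq \half\ln 10 - \ln s$ for all $s\in(0,k]$, as required.

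There is no real obstacle here: the corollary is just Lemma \ref{insulate_new} specialised to the rotationally-symmetric case, with the constant $\tfrac{\pi^2}{2}$ replaced by the slightly larger but more convenient numbers $5$ and $10$. The only thing to be careful about is bookkeeping between the metric formulation and the logarithmic-coordinate formulation of the hypotheses — in particular confirming that, under $s = -\ln r$, the hyperbolic metric $g_{hyp}$ is exactly $e^{2u}(ds^2+d\th^2)$ with $u(s) = -\ln s$, so that "$g(0) = g_{hyp}$ near the origin" and "$u(s,0) = -\ln s$ for small $s$" say the same thing. Everything else is the two numerical inequalities $\pi^2 < 10 < 20$.
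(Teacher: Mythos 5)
Your proof is correct and matches the paper's intended route: Corollary~\ref{insulate3} is stated as a weakening of Lemma~\ref{insulate_new} (the paper offers no separate proof), and your argument — apply the lemma directly and replace $\pi^2/2$ by the larger but more convenient constants $5$ and $10$, using $\pi^2 < 10 < 20$ — is exactly the immediate deduction the paper has in mind.
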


\begin{proof} (Lemma \ref{insulate_new}.)
The statement of the lemma may give the misleading impression that we are able to use some time-dependent dilation of $g_{hyp}$ directly as a barrier, but this is not possible with no information about $g(0)$ on $D_{e^{-2k}}$.
Instead we consider the complete hyperbolic metric $g_{e^{-2k}}$ on $D\backslash D_{e^{-2k}}$, which can be seen to satisfy 
$g(0)\leq g_{hyp}\leq g_{e^{-2k}}$ where it is defined, as in Section \ref{get_a_limit_sect}.
By the first part of Lemma \ref{hyp_comparison}, we have 
\begin{equation}
\label{hidden_est}
g(t)\leq (1+2t)g_{e^{-2k}}
\end{equation}
for all $t\in [0,\half]$.
We need the exact expression 
$$g_{e^{-2k}}=\left[
\frac{\pi}{2k\sin(\frac{s\pi}{2k})}
\right]^2(ds^2+d\th^2)$$
which allows us to translate \eqref{hidden_est} to
\begin{equation}
\label{uupper}
u(s,t)\leq 
-\ln \left[\frac{2k\sin(\frac{s\pi}{2k})}{\pi}\right]
+\half\ln (1+2t).
\end{equation}
Using the fact that $\sin(\frac{\pi x}{2})\geq x$ for $x\in [0,1]$, we find that for $s\in (0,k]$ and $t\in [0,\half]$, we have
$$u(s,t)\leq 
-\ln \left[\frac{2s}{\pi}\right]
+\half\ln (1+2t)
\leq -\ln s + \half\ln \left[\frac{\pi^2}{2}\right].$$
\end{proof}

\subsubsection{The cigar}

It will be most convenient to take logarithmic polar coordinates 
$s$ and $\th$ as defined in Appendix \ref{2Dappendix}
(i.e. so that $z=x+iy=e^{-(s+i\th)}$, where $x$ and $y$ are the standard coordinates on the plane).
In these coordinates, Hamilton's \emph{cigar} Ricci soliton metric can be defined to be $e^{2\cc(s)}(ds^2+d\th^2)$, for $s\in\R$,
where
$$\cc(s):=-\half\ln\left[1+e^{2s}\right],$$
and this induces the corresponding Ricci (soliton) flow $$(s,t)\mapsto\cc(s+2t).$$
%$$\cc(s,t):=\cc(s+2t)=-\half\ln\left[1+e^{2s+4t}\right].$$

In practice, we will need scaled (and translated) forms of the cigar, so for $\la>0$, we define
$$\cc_\la(s):=-\half\ln\la+\cc(s),$$ %\half\ln\left[1+e^{2s}\right],$$
which corresponds to scaling the curvature by a factor $\la$.
The subsequent Ricci (soliton) flow is
$$(s,t)\mapsto
%\cc_\la(s,t):=
\cc_\la(s+2\la t).$$ 
%=\half\ln\left[1+e^{2s+4t}\right].$$

\begin{rmk}
\label{cigar_rmk}
The conformal factor $\cc(s)$ has a number of obvious properties that we shall require, namely:
\begin{enumerate}
\item
$\sup_{s\in\R}\cc(s)=0$; 
\item
$\cc(s)$ is a decreasing function of $s$;
\item
$\cc(s)\leq -s$ for all $s\in \R$;
\item
$\cc(0)=-\half\ln 2$;
\item
$\cc(s)\geq -\half\ln 2 -s$ for all $s\geq 0$.
\end{enumerate}
\end{rmk}

\subsubsection{Cigar barriers}

The metric $g_{hyp}$ can be written with respect to $(s,\th)$ coordinates as $g_{hyp}=e^{2u_{hyp}(s)}(ds^2+d\th^2)$, for $s>0$, where 
$$u_{hyp}(s)=-\ln s.$$
We will construct the rotationally-symmetric metric $g_k(0)$ that will induce the Ricci flow $g_k(t)=:e^{2u_k(s,t)}(ds^2+d\th^2)$ (for $s>0$) by setting 
$u_k(s,0)=u_{hyp}(s)$ for $s\leq 2k$, and then extending $u_k(s,0)$ to $s>2k$ as an appropriate function which 
for large enough $s$ will coincide with the conformal factor of an appropriately scaled and translated cigar.

In fact, for each $k$ we will be constructing two cigar Ricci soliton flows $\cc_k^{lower}(s,t)$ and $\cc_k^{upper}(s,t)$ which will be lower and upper barriers respectively for $u_k(s,t)$ over the range $(s,t)\in [k,\infty)\times [0,\half]$, and so that $u_k(s,0)$ will coincide with $\cc_k^{lower}(s,0)$ for large enough $s$.
The idea of using upper and lower cigar barriers in this way was introduced in \cite{GT3}.

To decide \emph{which} cigars to take, we start with $\cc_k^{upper}$, and ask that at time $t=\half$, it is `centred at $k$', by which we mean that $\cc_k^{upper}(s,\half)=\cc_\la(s-k)$ for some $\la>0$. We want to choose $\la$ just small enough so that 
$\cc_k^{upper}(k,t)$ really must lie above $u_k(k,t)$
for $t\in [0,\half]$, and to this end, we set
$\la=\frac{k^2}{10}$ to force
\begin{equation}
\label{upper_def}
\begin{aligned}
\cc_k^{upper}(s,t)&=\cc_{\frac{k^2}{10}}\left(s-k+\frac{2k^2}{10}\left(t-\half\right)\right)\\
&=\cc_{\frac{k^2}{10}}\left(s-k-\frac{k^2}{10}+\frac{2k^2}{10}t\right).
\end{aligned}
\end{equation}
Indeed, note that by Part 1 of Corollary \ref{insulate3} and Parts 2 and 4 of Remark \ref{cigar_rmk} we have
\begin{equation}
u_k(k,t)\leq \half\ln 5 - \ln k
%=-\half\ln\left(\frac{k^2}{10}\right)-\half\ln2
= \cc_{\frac{k^2}{10}}(0)
= \cc_k^{upper}(k,\half)
\leq \cc_k^{upper}(k,t),
\end{equation}
for $t\in [0,\half]$. Then as long as we choose $u_k(s,0)$ so that
$$u_k(s,0)\leq \cc_k^{upper}(s,0) \qquad\text{ for }s\geq k,$$
we will be able to apply the comparison principle for $s\geq k$ (strictly speaking we should return to working on the disc 
$\overline{D_{e^{-k}}}$ 
and apply it there, as in \cite{GT3}) to deduce that
\begin{equation}
\label{upper_cigar_est}
u_k(s,t)\leq \cc_k^{upper}(s,t),
\end{equation}
for $s\geq k$ and $t\in [0,\half]$.

We now turn to $\cc_k^{lower}$. We ask that at $t=0$, this lower barrier be translated by the same amount as the upper barrier $\cc_k^{upper}$, and this forces us to take 
$\cc_k^{lower}(s,t)=\cc_\la(s-k-\frac{k^2}{10}+2\la t)$, for some $\la>0$.
We choose $\la$ large enough so that 
$\sup (\cc_k^{lower})=-\half\ln\la$ is a little lower than 
$u_k(2k,0)=u_{hyp}(2k)=-\ln(2k)=-\half\ln(4k^2)$,
and we can achieve this by taking $\la=5k^2$, for example, giving
\begin{equation}
\label{low_enough}
\sup_{s\in\R} \,\cc_k^{lower}(s,0)\leq u_k(2k,0)=u_{hyp}(2k).
\end{equation}
This then determines 
\begin{equation}
\label{lower_def}
\cc_k^{lower}(s,t)=\cc_{5k^2}\left(s-k-\frac{k^2}{10}+10k^2 t\right).
\end{equation}

\subsubsection{Definition of $g_k(t)$}
\label{gkdef_sect}

Now that we are equipped with our cigar barriers $\cc_k^{upper}$ and $\cc_k^{lower}$ from \eqref{upper_def} and \eqref{lower_def}, we are in a position to define rotationally-symmetric flows
$g_k(t)=:e^{2u_k(s,t)}(ds^2+d\th^2)$ (for $s>0$) by determining the initial data $u_k(\cdot,0)$.
We choose any smooth $u_k(\cdot,0)$ satisfying:
\begin{enumerate}
\item
$u_k(s,0)=u_{hyp}(s)$ for $s\in (0,2k]$;
\item
$u_k(s,0)\leq\cc_k^{upper}(s,0)$ for all $s\geq 2k$;
\item
$u_k(s,0)\geq\cc_k^{lower}(s,0)$ for $s\geq 2k$.
\item
$u_k(s,0)=\cc_k^{lower}(s,0)$ for $s\geq 3k$.
\end{enumerate}
Clearly this can be done in a completely systematic way if desired. As we have mentioned already, the flows $g_k(t)$ are taken to be the maximally stretched Ricci flows constructed in 
\cite[Theorem 1.3]{GT2} corresponding to the initial metrics $g_k(0)$. Note that by the uniqueness of maximally-stretched solutions \cite[Theorem 1.3]{GT2}, the flows must retain the rotational symmetry of the initial metrics.
Strictly speaking, we will pass to a subsequence to obtain the Ricci flows $g_k(t)$ of Theorem \ref{mainthm}.

\subsection{\emph{A priori} estimates for $g_k(t)$ and $g_\infty(t)$, and a proof of Theorem \ref{mainthm}}
\label{apriori_est_sect}
It is the sequence $g_k(t)$ from Section \ref{gkdef_sect} that we put into Lemma \ref{get_a_limit} to get a limit $g_\infty(t)$. We should pause to verify the hypothesis that $g_k(0)\geq g_{poin}$, but this is easy to see by inspection for large enough $k$. Indeed for $s\in (0,2k]$, the maximum principle (or direct computation -- see Appendix \ref{2Dappendix}) is telling us that
$$g_k(0)=g_{hyp}\geq g_{poin},$$
while for $s\geq 2k$ and $k\geq 10$, the conformal factor of $g_k(0)$ is coarsely estimated (using Part 3 of the definition of $g_k(0)$ and Parts 2 and 5 of Remark \ref{cigar_rmk}) by
\begin{equation}
\begin{aligned}
u_k(s,0)&\geq \cc_k^{lower}(s,0)
=-\half\ln (5k^2) +\cc(s-k-\frac{k^2}{10})\\
&\geq-\half\ln (5k^2) +\cc(s-2k)
\geq -\half\ln (5k^2) -\half\ln 2 - (s-2k)\\
&\geq -s + \ln 2 \geq -\ln\sinh s,
\end{aligned}
\end{equation}
and $-\ln\sinh s$ is the conformal factor of $g_{poin}$
(see Appendix \ref{2Dappendix}).

We are thus truly in a situation where we can apply Lemma \ref{get_a_limit} to extract a limit: There exists a Ricci flow
$g_\infty(t)$ on $\Do$ for $t\in [0,\infty)$
with $g_\infty(0)=g_{hyp}$ such that after passing to a subsequence, we have
$$g_k(t)\to g_\infty(t)$$
smoothly locally on $(\Do)\times [0,\infty)$ as $k\to\infty$.
This is to be the sequence and limit of Theorem \ref{mainthm}, and we have already proved Part 1.

To prove Part 2 of Theorem \ref{mainthm}, note that Lemma \ref{get_a_limit} also tells us that if there existed $a>0$ such that $g_\infty(t)$ were complete for all $t\in [0,a]$, then we would have 
\begin{equation}
%\label
g_\infty(t)= (1+2t)g_{hyp},
\end{equation}
for all $t\in [0,a]$, as desired. This completeness will be a consequence of:

%\cmt{Below, we're not claiming uniqueness of the $g_\infty(t)$ (independent of the subsequence) so we don't have to justify it!!!}
\begin{lemma}
\label{shorttimecompleteness}
Given $g_\infty(t)$ constructed as above as a subsequential limit of Ricci flows
$g_k(t)$ as defined in Section \ref{gkdef_sect},
we have
$$g_\infty(t)\geq \frac{1}{10}g_{hyp}$$
for all $t\in [0,\frac{1}{100}]$.
\end{lemma}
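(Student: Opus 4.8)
The plan is to pass to conformal factors and compare $u_k$ against a fixed lower barrier. Writing the (rotationally symmetric) flows as $g_k(t)=e^{2u_k(s,t)}(ds^2+d\th^2)$ and the limit as $g_\infty(t)=e^{2u_\infty(s,t)}(ds^2+d\th^2)$ in logarithmic polar coordinates, and recalling $u_{hyp}(s)=-\ln s$, the claimed bound $g_\infty(t)\ge\tfrac1{10}g_{hyp}$ is precisely $u_\infty(s,t)\ge u_{hyp}(s)-\half\ln 10$. Since $g_k(t)\to g_\infty(t)$ smoothly locally on $\Do\times[0,\infty)$, it suffices to prove that for all sufficiently large $k$,
\[u_k(s,t)\ \ge\ \psi(s):=u_{hyp}(s)-\half\ln 10=-\ln s-\half\ln 10\qquad\text{on }(0,k]\times[0,\tfrac1{100}],\]
and then let $k\to\infty$ (for each fixed $s$ one eventually has $k>s$). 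The point of this $\psi$ is that it is a time-independent, \emph{strict} subsolution of the rotationally-symmetric Ricci flow equation $\partial_tu=e^{-2u}\partial_s^2u$ of Appendix~\ref{2Dappendix}: indeed $e^{-2\psi}\partial_s^2\psi=(10s^2)(s^{-2})=10>0=\partial_t\psi$.

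I would then run the parabolic comparison principle for $u_k$ against $\psi$ on the compact annulus $s\in[1,k]$, $t\in[0,\tfrac1{100}]$, checking $\psi\le u_k$ on the three parts of the parabolic boundary. At $t=0$: $u_k(s,0)=u_{hyp}(s)\ge\psi(s)$ on $(0,2k]\supseteq[1,k]$, by Part~1 of the definition of $g_k(0)$. At the inner edge $s=1$ — and simultaneously for every $s\in(0,1]$, which disposes of that slab directly — the maximal-stretching property gives, for $k$ large, $g_k(t)\ge g_{poin}$, hence $u_k(s,t)\ge-\ln\sinh s\ge-\ln s-\half\ln 10=\psi(s)$ for $s\le1$ (using $\sinh 1<\sqrt{10}$), uniformly in $k$ and $t$. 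The substantive input is the outer edge $s=k$, where I use the lower cigar barrier $u_k(s,t)\ge\cc_k^{lower}(s,t)$ on $[k,\infty)\times[0,\half]$ asserted in the construction (the lower analogue of \eqref{upper_cigar_est}, obtained from the comparison principle together with Part~3 of the definition of $g_k(0)$ and \eqref{low_enough}). Evaluating \eqref{lower_def} at $s=k$ gives $\cc_k^{lower}(k,t)=-\half\ln(5k^2)+\cc\!\big(10k^2t-\tfrac{k^2}{10}\big)$; for $t\in[0,\tfrac1{100}]$ the argument of $\cc$ lies in $[-\tfrac{k^2}{10},0]$, so by monotonicity of $\cc$ and the value $\cc(0)=-\half\ln 2$ (Parts~2 and~4 of Remark~\ref{cigar_rmk}),
\[\cc_k^{lower}(k,t)\ \ge\ -\half\ln(5k^2)-\half\ln 2\ =\ -\half\ln(10k^2)\ =\ -\ln k-\half\ln 10\ =\ \psi(k),\]
so $u_k(k,t)\ge\psi(k)$ throughout $[0,\tfrac1{100}]$.

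With all three boundary comparisons in hand, the comparison principle — quasilinear only in the harmless sense already exploited for the cigar barriers (set $w=\psi-u_k$, obtaining a linear parabolic inequality $\partial_tw\le e^{-2\psi}\partial_s^2w+b\,w$ with $b$ bounded on the compact region) — yields $u_k\ge\psi$ on $[1,k]\times[0,\tfrac1{100}]$, and together with the inner-slab bound this gives $u_k\ge\psi$ on $(0,k]\times[0,\tfrac1{100}]$. Letting $k\to\infty$ and invoking the smooth local convergence then gives $u_\infty(s,t)\ge\psi(s)$ for every $s>0$ and $t\in[0,\tfrac1{100}]$, i.e.\ $g_\infty(t)\ge\tfrac1{10}g_{hyp}$. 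I expect the only real work to be bookkeeping: confirming the lower cigar estimate up to and including $s=k$ (mirroring the argument already given for \eqref{upper_cigar_est}), and organising the comparison on the $\partial D$-touching domain $(0,k]$ via the explicit inner barrier $-\ln\sinh s$ — exactly the device used when restricting cigar comparisons to $\overline{D_{e^{-k}}}$. Note that the threshold $\tfrac1{100}$ is forced precisely by the $s=k$ computation: for larger $t$ the argument $10k^2t-\tfrac{k^2}{10}$ turns positive and $\cc$ drops below $-\half\ln 2$, so the cigar no longer dominates $\psi$ at $s=k$.
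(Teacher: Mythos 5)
Your proof is correct and is essentially the paper's argument: the lower cigar barrier $u_k\ge\cc_k^{lower}$ (coming from the maximally-stretched property) yields $u_k(k,t)\ge -\ln k-\half\ln 10$ for $t\in[0,\tfrac1{100}]$, after which the time-independent strict subsolution $-\ln s-\half\ln 10$ is propagated inward by the comparison principle and the limit $k\to\infty$ is taken. The only cosmetic difference is how the inner boundary $s\downto 0$ is handled: the paper shifts the barrier to $l(s)=-\ln(s+\delta)-\half\ln 10$ (bounded near $s=0$, with $u_k\to\infty$ there by the $g_{poin}$ bound) and sends $\delta\downto 0$, whereas you split off the slab $s\in(0,1]$ and dispose of it directly via $g_k(t)\ge g_{poin}$ and $\sinh s\le s\sqrt{10}$; both devices work.
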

Thus, modulo a proof of this lemma, we can take $a=\eta=\frac{1}{100}$ to establish Part 2 of Theorem \ref{mainthm}.

\begin{proof} (Lemma \ref{shorttimecompleteness}.)
The main ingredient is the lower cigar Ricci soliton 
%$\cc(s,t)$ 
defined by \eqref{lower_def}. By Parts 1 and 3 of the definition of $g_k(0)$ from Section \ref{gkdef_sect}
(keeping in mind \eqref{low_enough}) we know that
the conformal factor $u_k(s,t)$ of $g_k(t)$ satisfies
\begin{equation}
\label{below_initially}
u_k(s,0)\geq \cc_k^{lower}(s,0)\qquad\text{ for all }s>0.
\end{equation}
The Ricci flows $g_k(t)$ were constructed using \cite[Theorem 1.3]{GT2}, and thus they are maximally stretched. % \cite{GT2}. 
But $\cc_k^{lower}(s,t)$ induces a Ricci flow on the plane which can be restricted to $D$, and which initially lies below $g_k(0)$ by \eqref{below_initially}. By the definition of maximally stretched, or by Lemma \ref{thm:direct-comp-principle}, it must continue to lie below, that is
$$u_k(s,t)\geq \cc_k^{lower}(s,t)\qquad\text{ for all }s>0\text{ and }
t>0.$$
In particular, for $t\in [0,\frac{1}{100}]$, we have
\begin{equation}
\label{edge_control}
\begin{aligned}
u_k(k,t)&\geq \cc_k^{lower}(k,t)\geq \cc_k^{lower}(k,\frac{1}{100})
=\cc_{5k^2}(0)=-\half\ln (5k^2)-\half\ln 2\\
&=-\ln k -\half\ln10.
\end{aligned}
\end{equation}
Given that $u_k(s,0)=-\ln s$ for $s\in (0,k]$, we can now conclude the proof if we can use $s\mapsto -\ln s-\half\ln10$ as a lower barrier for $u_k(s,t)$ over the time interval $t\in [0,\frac{1}{100}]$, which would imply $g_k(t)\geq \frac{1}{10}g_{hyp}$ on $D\backslash D_{e^{-k}}$ and hence $g_\infty(t)\geq \frac{1}{10}g_{hyp}$ on $\Do$ for all $t\in [0,\frac{1}{100}]$, as desired.

To verify that $s\mapsto -\ln s-\half\ln10$ is a lower barrier for $u_k(s,t)$ over the time interval $t\in [0,\frac{1}{100}]$,
we first note that just as in the proof of Lemma \ref{get_a_limit},
we have $g_k(t)\geq (1+2t)g_{poin}\geq g_{poin}$ for all $t\geq 0$, and thus $u_k(s,t)\geq -\ln\sinh s$, which ensures that for each $t\geq 0$, the function $u_k(s,t)$ converges to infinity at a $k$-independent rate as $s\downto 0$. We then consider, 
for each $\de>0$, the function $l:(-\de,k]\to\R$ defined by 
$$l(s)=-\ln (s+\de) -\half\ln 10,$$ 
which satisfies $e^{-2l}l_{ss}=10>0$
(i.e. $l$ is a subsolution of the Ricci flow). For $s\in (0,k]$, we have
$$l(s)< -\ln s-\half\ln 10\leq u_k(s,0),$$
and for $t\in [0,\frac{1}{100}]$, we have
$$l(k)< -\ln k-\half\ln 10\leq u_k(k,t),$$
by \eqref{edge_control}.
Then because $u_k$ solves $\pl{u}{t}=e^{-2u}u_{ss}$, the comparison principle forces it to remain above the function $l$, and we have for each $s\in (0,k]$ and $t\in [0,\frac{1}{100}]$ the required estimate
$$u_k(s,t)\geq l(s)\stackrel{\de\downto 0}\longrightarrow
-\ln s-\half\ln 10.$$
\end{proof}

Having established Part 2, 
the next step in the proof of Theorem \ref{mainthm} is to establish Parts 3 and 4, which assert that the limit $(\Do,g_\infty(t))$ is not complete for $t\geq 1$, and that $g_k(t)$ converges smoothly locally on $D\times [1,\infty)$ to an 
extension of $g_\infty(t)$ to the whole of of $D$ for $t\geq 1$.
For that, the main ingredients will be the upper cigar barrier $\cc_k^{upper}$ and a key estimate from \cite{revcusp}. The first step is:
\begin{lemma}
\label{thalfcontrol}
The Ricci flows $g_k(t)$ as defined in Section \ref{gkdef_sect} satisfy
$$g_k\left(\half\right)\leq 10 g_{hyp}$$
on $\Do$.
\end{lemma}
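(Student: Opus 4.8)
The plan is to translate the claim into a pointwise bound on conformal factors and then feed in two barriers that have already been set up. Writing $g_k(t)=e^{2u_k(s,t)}(ds^2+d\th^2)$ (the flow is rotationally symmetric, so $u_k$ is independent of $\th$) and recalling that $g_{hyp}=e^{2u_{hyp}(s)}(ds^2+d\th^2)$ with $u_{hyp}(s)=-\ln s$, the inequality $g_k(\half)\leq 10\,g_{hyp}$ on $\Do$ is exactly
$$u_k(s,\half)\leq \half\ln 10-\ln s\qquad\text{for all }s>0.$$
I would prove this separately on the two ranges $s\in(0,k]$ and $s\geq k$.

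For $s\in(0,k]$: since Part 1 of the definition of $g_k(0)$ in Section \ref{gkdef_sect} gives $u_k(s,0)=u_{hyp}(s)=-\ln s$ for $s\in(0,2k]$, Corollary \ref{insulate3} applies to the rotationally-symmetric flow $g_k(t)$ with this same $k$, and Part 2 of that corollary is precisely $u_k(s,\half)\leq\half\ln 10-\ln s$ for $s\in(0,k]$. So the desired bound holds on $D\backslash D_{e^{-k}}$ with nothing further to do.

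For $s\geq k$: here I would use the upper cigar barrier \eqref{upper_cigar_est}, i.e. $u_k(s,t)\leq\cc_k^{upper}(s,t)$ for $s\geq k$ and $t\in[0,\half]$. Plugging $t=\half$ into \eqref{upper_def} collapses the translation, giving $\cc_k^{upper}(s,\half)=\cc_{k^2/10}(s-k)=-\ln k+\half\ln 10+\cc(s-k)$, and Part 3 of Remark \ref{cigar_rmk}, $\cc(\sigma)\leq-\sigma$, yields $\cc_k^{upper}(s,\half)\leq -\ln k+\half\ln 10-(s-k)$. It then remains to observe the elementary inequality $\ln(s/k)\leq s-k$ for $s\geq k\geq 1$ (from $\ln x\leq x-1$ together with $s-k=k(s/k-1)\geq s/k-1$), which rearranges to $-\ln k-(s-k)\leq-\ln s$ and hence gives $u_k(s,\half)\leq\half\ln 10-\ln s$ on $D_{e^{-k}}\backslash\{0\}$. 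Combining the two ranges proves the lemma.

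I do not expect any genuine obstacle here: all the hard work — constructing the cigar barriers and tuning the scale $\la=k^2/10$ in \eqref{upper_def} — was done earlier, and the only point worth flagging is that this choice of scale is exactly what makes the cigar's linear decay $\cc(\sigma)\leq-\sigma$ beat the logarithmic profile $u_{hyp}(s)=-\ln s$ for all $s\geq k$, so that the two barrier regimes overlap cleanly at $s=k$ (in fact with room to spare, since Part 1 of Corollary \ref{insulate3} already controls $u_k(k,t)$ better than needed).
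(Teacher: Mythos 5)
Your proof is correct and follows essentially the same route as the paper: split into $s\in(0,k]$ and $s\geq k$, use Part 2 of Corollary \ref{insulate3} on the inner range, and use the upper cigar barrier \eqref{upper_cigar_est} together with Part 3 of Remark \ref{cigar_rmk} (plus the elementary inequality $\ln(s/k)\leq s-k$) on the outer range. No meaningful difference from the paper's argument.
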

We stress that we do \emph{not} have $g_k(0)\leq M g_{hyp}$ on $\Do$ for any $k$-independent constant $M<\infty$. We do have to wait a little until we have a good upper bound by a controlled multiple of $g_{hyp}$.

\begin{proof}
First, by \eqref{upper_cigar_est} and Part 3 of 
Remark \ref{cigar_rmk}, we have 
\begin{equation}
\begin{aligned}
u\left(s,\half\right) &\leq \cc_k^{upper}\left(s,\half\right)=
\cc_{\frac{k^2}{10}}\left(s-k\right)
=-\half\ln\left(\frac{k^2}{10}\right)+\cc(s-k)\\
&\leq -\ln k+\half\ln 10-(s-k)\\
&\leq -\ln s +\half\ln 10
\end{aligned}
\end{equation}
for all $s\geq k$.
On the other hand, by Part 2 of Corollary \ref{insulate3},
we also have 
$$u\left(s,\half\right)\leq -\ln s+\half\ln 10$$
for $s\in (0,k]$.
Thus, we have established the required estimate for all $s>0$.
\end{proof}
From time $t=\half$ onwards, Lemma \ref{thalfcontrol} will allow us to apply the following lemma, which is a special case of \cite[Lemma 3.3]{revcusp}, and applies without any rotational symmetry hypothesis.
\begin{lemma}
\label{flowupperbdlemma}
If $g(t)=e^{2v(t)}|\dz|^2$ is any smooth Ricci flow on
$D$, for $t\in [0,1]$, with $g(0)\leq g_{hyp}$ 
then there exists $\be<\infty$ universal such that
\begin{equation}
\sup_{D_{\half}}v(\cdot,t)\leq \frac{\be}{t}
\end{equation}
for $t\in (0,1]$.
\end{lemma}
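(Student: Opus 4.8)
The plan is to prove the bound $\sup_{D_{1/2}} v(\cdot,t) \le \beta/t$ by a two-step comparison argument: first use the hyperbolic bound to control $v$ away from the boundary of $D$, and then glue in a suitable subsolution near the origin that blows up like $-\ln t$ as $t \downto 0$. The underlying point is that on two dimensions, Ricci flow reads $\pl{v}{t} = e^{-2v}\Delta v = -K$, so $K$ large and negative is exactly what allows $v$ to grow; but the maximally-stretched bound $g(t) \le (1+2t)g_{hyp}$ from Lemma \ref{hyp_comparison} already gives $v(\cdot,t) \le u_{hyp} + \half\ln(1+2t)$, and on $D_{1/2}$ the function $u_{hyp}$ (equivalently $-\ln[r(-\ln r)]$) is bounded \emph{except} near the origin. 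So the content of the lemma is entirely local near $z=0$, and the job is to beat the logarithmic singularity of $u_{hyp}$ at $0$ by something that is uniformly bounded at each fixed $t>0$ but degenerates only as $t\to 0$.

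The key step is to construct an explicit supersolution (upper barrier) of the form $w(z,t) = \psi(|z|) + \ln(1/t) + C_0$ on an annular region $D_{3/4}\backslash\{0\}$ for small $t$, where $\psi$ is chosen so that $e^{-2w}\Delta w \le \pl{w}{t}$, i.e. so that $e^{-2\psi}\Delta\psi \le -t^{-1}e^{-2\psi}\cdot e^{2(\ln(1/t)+C_0)}\cdot(\text{something})$ — more cleanly, one rescales: if $g$ is instantaneously complete at the origin (which it is not assumed to be) one would quote the standard fact that a complete Ricci flow on a surface instantaneously has curvature bounded below by Chen's estimate; but since we cannot assume completeness, we instead use as barrier a fixed dilation of the \emph{cigar} soliton, exactly as in Remark \ref{cigar_rmk} and Corollary \ref{insulate3}. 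Concretely, in logarithmic coordinates $g(t) = e^{2u(s,\th,t)}(ds^2+d\th^2)$ with $s = -\ln|z|$, the region $D_{1/2}$ is $\{s \ge \ln 2\}$, and we want $u(s,\th,t) \le \beta/t - \ln(1/|z|)\cdot 1$... — the point is that a cigar $\cc_\la(s+2\la t + c)$ has supremum $-\half\ln\la$, so choosing $\la = \la(t)$ so that the cigar lies above $u(\cdot,0) \le u_{hyp} = -\ln s$ at $t=0$ on the relevant range, and tracking how small $\la$ must be, one gets $\sup u(\cdot,t) \sim -\half\ln\la(t)$; the geometry of the cigar flow (translation speed $2\la$) forces $\la \gtrsim 1/t$ to keep the profile valid up to time $t$, giving $\sup u(\cdot,t) \lesssim \half\ln(1/\la) \lesssim \ln(1/t)$, hence $v \le \beta/t$ after converting $\ln(1/t)$ to the cruder $\beta/t$.

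I would carry this out as follows. \emph{Step 1:} Reduce to the rotationally-symmetric logarithmic-coordinate picture and note $g(0)\le g_{hyp}$ translates to $u(s,\th,0)\le -\ln s$ for $s>0$. \emph{Step 2:} On the outer part $s\in[\ln 2, s_0]$ for a fixed $s_0$, apply Lemma \ref{hyp_comparison} directly: $u(s,\th,t)\le -\ln s + \half\ln(1+2t)\le -\ln(\ln 2)+\half\ln 3$, a constant, which is certainly $\le \beta/t$ for $t\in(0,1]$ once $\beta$ is large. \emph{Step 3:} On the inner part $s\ge s_0$ (i.e.\ near the origin), introduce the cigar barrier: pick the cigar $\cc_{\la}$ with $\la$ chosen (depending on $t$) so that at time $0$ it dominates $-\ln s$ on $[s_0,\infty)$ after an appropriate translation, then let it flow; by the comparison principle (working on a disc $\overline{D_{e^{-s_0}}}$ and using that $u$ blows up as $s\to\infty$... no — as $s_0$ and beyond, using the barrier on the boundary circle $s=s_0$ from Step 2 and the known initial domination) conclude $u(s,\th,t)\le \cc_\la(\text{translated argument})\le -\half\ln\la \lesssim \ln(1/t)$. \emph{Step 4:} Combine Steps 2 and 3, convert the resulting $\max(\text{const},\,c\ln(1/t)+c)$ bound to $\beta/t$ by absorbing into a single universal $\beta$, using $\ln(1/t)\le 1/t$ for $t\in(0,1]$.

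The main obstacle will be Step 3: getting the comparison principle to apply cleanly when no completeness (hence no a priori boundary control at the origin) is assumed. One must either work on a truncated domain and feed in the outer bound from Step 2 along $s=s_0$ and the initial bound $u(\cdot,0)\le -\ln s$ everywhere, checking that the chosen cigar barrier genuinely dominates on the parabolic boundary of $[s_0,\infty)\times[0,t]$, or else appeal to a localized maximum-principle argument as in \cite{GT3}; the bookkeeping of how small $\la$ may be taken (it is $\la$ of order $1/t$, not a constant) while still having the barrier valid for the full time $[0,t]$ is the delicate quantitative heart of the estimate. Since this lemma is quoted from \cite[Lemma 3.3]{revcusp}, in the present paper it suffices to cite that reference, but the cigar-barrier strategy above is how one would reprove it.
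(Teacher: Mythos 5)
The paper does not prove Lemma~\ref{flowupperbdlemma}; it cites it as a special case of \cite[Lemma 3.3]{revcusp}, and you correctly note at the end that in this paper ``it suffices to cite that reference.'' On that level your proposal matches the paper.

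However, the independent ``cigar-barrier'' reproof you sketch in Step~3 contains a genuine gap, and it is worth spelling out because it is exactly the obstruction that the rest of the paper's construction is designed to sidestep. In logarithmic coordinates the hyperbolic cusp has $u_{hyp}(s)=-\ln s$, while a scaled and translated cigar $\cc_\la(s-c)$ satisfies $\cc_\la(s-c)\approx-\tfrac12\ln\la-(s-c)$ for $s\gg c$ (Part~3 of Remark~\ref{cigar_rmk}): it decays \emph{linearly} in $s$, whereas the cusp decays only \emph{logarithmically}. Consequently, for any fixed $\la,c$ there is some $s_1<\infty$ beyond which $\cc_\la(s-c)<-\ln s$, so no cigar can dominate $u(\cdot,0)\le u_{hyp}$ on an unbounded range $[s_0,\infty)$. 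The initial-time inequality required to launch the comparison principle in your Step~3 therefore cannot be arranged, and the argument breaks at the first step. This is not a bookkeeping issue in how small $\la$ may be; it is structural. Indeed, this is precisely why Section~\ref{gkdef_sect} of the paper takes initial data that equal $-\ln s$ only on the \emph{bounded} range $(0,2k]$ and then become cigar-like for $s\ge 3k$: the upper cigar barrier $\cc_k^{upper}$ (and estimate \eqref{upper_cigar_est}) is only ever applied on $s\ge k$, where the modified initial data genuinely lie below it. In the setting of Lemma~\ref{flowupperbdlemma} no such truncation of the initial data is assumed, so the cigar comparison does not apply, and one really does need the different (and more delicate) argument of \cite[Lemma 3.3]{revcusp}. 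As a secondary point, even if a cigar barrier did apply, your own heuristic would yield $\sup v\lesssim\ln(1/t)$ rather than the stated $\beta/t$; the fact that the lemma asserts only the weaker $\beta/t$ rate is itself a signal that a harder mechanism, not a clean soliton comparison, is at work.
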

This will be the main ingredient required to uniformly control the conformal factors of the $g_k(t)$ at time $t=\frac{3}{4}$:
\begin{lemma}
\label{later_control_at_origin}
The Ricci flows $g_k(t)=:e^{2v_k(t)}|\dz|^2$ as defined in Section \ref{gkdef_sect} satisfy
$$\sup_{D_{1/4}}v_k(\cdot,t)\leq C+t$$
for $t\geq \frac{3}{4}$ and for some universal $C<\infty$.
\end{lemma}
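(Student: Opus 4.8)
The plan is to upgrade the time-$\tfrac12$ barrier of Lemma \ref{thalfcontrol}, which controls $g_k$ only by a multiple of the origin-singular metric $g_{hyp}$, into a genuine bound on the conformal factor $v_k$ over all of $D_{1/4}$ (the origin included); the upgrade is to be supplied by a parabolically rescaled application of the smoothing estimate Lemma \ref{flowupperbdlemma}, which is exactly the tool that turns a bound by $g_{hyp}$ into a bound on $v_k$ near the puncture after positive time.

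First I would propagate Lemma \ref{thalfcontrol} forwards in time. The rescaled Ricci flow $\bar g(s):=\tfrac1{10}g_k(\tfrac12+10s)$ on $\Do$, defined for all $s\geq0$ since $g_k$ is immortal, satisfies $\bar g(0)=\tfrac1{10}g_k(\tfrac12)\leq g_{hyp}$ by Lemma \ref{thalfcontrol}, so the first part of Lemma \ref{hyp_comparison} (applied with $\m=\Do$, $H=g_{hyp}$) gives $\bar g(s)\leq(1+2s)g_{hyp}$ for all $s\geq0$; rewriting in terms of $\tau=\tfrac12+10s$,
$$g_k(\tau)\leq(9+2\tau)\,g_{hyp}\qquad\text{on }\Do,\ \text{for every }\tau\geq\tfrac12.$$
By itself this says nothing about $\sup_{D_{1/4}}v_k$, since the right-hand side blows up at the origin.

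Then, for a fixed target time $t\geq\tfrac34$, I would set $\tau:=\max\{\tfrac12,\tfrac t2\}$ (so that $\tfrac12\leq\tau<t$ and $t-\tau\geq\tfrac14$), put $A:=9+2\tau\geq10$, and consider the rescaled Ricci flow $h(s):=\tfrac1A\,g_k(\tau+As)$ on $D$ for $s\in[0,1]$, which is legitimate since $g_k$ is immortal. The previous step gives $h(0)=\tfrac1A\,g_k(\tau)\leq g_{hyp}$ on $\Do$, so, writing $h(s)=e^{2w(s)}|\dz|^2$, Lemma \ref{flowupperbdlemma} yields $\sup_{D_{1/2}}w(\cdot,s)\leq\beta/s$ for $s\in(0,1]$. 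Since $w(s)=v_k(\tau+As)-\tfrac12\ln A$, plugging in $s=(t-\tau)/A\in(0,1]$ gives
$$\sup_{D_{1/4}}v_k(\cdot,t)\ \leq\ \sup_{D_{1/2}}v_k(\cdot,t)\ \leq\ \frac{\beta A}{t-\tau}+\tfrac12\ln A,$$
and a short case check ($t\geq1$, where $\tau=\tfrac t2$, $A=9+t$, $t-\tau=\tfrac t2$; and $t\in[\tfrac34,1)$, where $\tau=\tfrac12$, $A=10$) shows $A/(t-\tau)\leq40$ and $A\leq9+2t$, whence the right-hand side is at most $40\beta+\tfrac12\ln(9+2t)\leq40\beta+\tfrac92+t=:C+t$ with $C$ universal.

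The only genuine subtlety, and hence the main obstacle, is the choice of base time in the last step: one must step back to a fixed \emph{fraction} of $t$ (here $\tfrac t2$), not a fixed amount before $t$, so that the rescaling factor $A\sim t$ and the rescaled elapsed time $(t-\tau)/A$ stay comparable; stepping back a fixed amount would leave a term of size $\sim\beta t$ and destroy the linear-in-$t$ bound. Everything else is the routine bookkeeping above, together with the standard facts that a parabolic rescaling of a Ricci flow is again a Ricci flow and that the hypotheses of Lemmas \ref{hyp_comparison} and \ref{flowupperbdlemma} are preserved under such rescalings.
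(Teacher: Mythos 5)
Your proof is correct, but it takes a genuinely different route from the paper's. The paper rescales once to time $\tfrac12$, applies Lemma \ref{flowupperbdlemma} a single time to obtain a fixed bound $\gamma$ on $v_k(\cdot,\tfrac34)$ over $D_{1/2}$, and then compares $v_k$ against an explicit, homothetically expanding hyperbolic metric $G(t)=e^{2w}|\dz|^2$ on $D_{1/2}$ (with $w(z,t)=\ln[1/(\tfrac14-|z|^2)]+\tfrac12\ln[\al+2t]$) via the comparison principle; the expanding barrier alone supplies the $t$-growth. You avoid the explicit barrier entirely: you first propagate the Lemma \ref{thalfcontrol} bound forward through Lemma \ref{hyp_comparison} to get $g_k(\tau)\leq(9+2\tau)g_{hyp}$ for all $\tau\geq\tfrac12$, and then for each target time $t$ you parabolically rescale about the \emph{moving} base time $\tau\sim t/2$ so that the rescaled flow again satisfies the hypothesis of Lemma \ref{flowupperbdlemma}, with a rescaled elapsed time that stays bounded away from zero and one. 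The paper's method requires introducing and verifying a bespoke expanding supersolution; your method reuses the two already-proved lemmas plus scaling, at the cost of a separate rescaling for every $t$ and a short case analysis on $\tau$. Your remark about needing to step back a fixed \emph{fraction} of $t$ rather than a fixed amount is exactly the right observation, and your arithmetic (yielding a final bound $\lesssim\tfrac12\ln(9+2t)+40\be$, which is $O(\ln t)$ like the paper's) checks out. Both arguments are correct; the paper's gives a marginally cleaner constant, yours is arguably more self-contained.
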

No such uniform upper bound could possibly hold for any 
$t\in [0,\frac{1}{100}]$.
\begin{proof}
Let us fix $k\in\N$, and define a parabolically rescaled Ricci flow \cite[\S 1.2.3]{RFnotes} by
$$g(t):=\frac{1}{10}g_k\left(\half+10t\right)$$
for $t\geq 0$. If we write $g(t)=e^{2v}|\dz|^2$, then 
this makes $v(\cdot,t)=-\half\ln 10+v_k(\cdot,\half+10t)$.
By Lemma \ref{thalfcontrol} we have
$$g(0)\leq g_{hyp},$$
and hence by Lemma \ref{flowupperbdlemma}, we have
$$\sup_{D_{\half}}v\leq \frac{\be}{t}$$
for universal $\be<\infty$, and $t\in (0,1]$. Translating back from $v$ to $v_k$, we find that
$$v_k\left(\cdot,\frac{3}{4}\right)=v\left(\cdot,\frac{1}{40}\right)+\half\ln10\leq 40\be+\half\ln 10=:\gamma,$$
on $D_{\half}$. Now we take the complete, homothetically expanding Ricci flow $G(t)=e^{2w}|\dz|^2$ on $D_\half$, for $t\geq \frac{3}{4}$, defined by
$$w(z,t):=\ln\left[\frac{1}{1/4-|z|^2}\right]+\half\ln [\al+2t],$$
where the universal constant $\al\in [1,\infty)$ is chosen large enough so that
$w(z,t)>\gamma$ for all $t\geq \frac{3}{4}$ and $z\in D_\half$.
Now 
$$w\left(\cdot,\frac{3}{4}\right)>\gamma\geq v_k\left(\cdot,\frac{3}{4}\right)$$
so exploiting the fact that $G(t)$ is the unique maximally stretched Ricci flow \cite{GT2} (or using the comparison principle Lemma \ref{thm:direct-comp-principle}) we find that
$$w\left(\cdot,t\right)\geq v_k\left(\cdot,t\right)$$
on $D_\half$, for all $t\geq\frac{3}{4}$, and thus 
$$\sup_{D_{1/4}}v_k(\cdot,t)\leq \ln \frac{16}{3}+\half\ln[\al+2t],$$
for all $t\geq\frac{3}{4}$, which is a little stronger than claimed in the lemma.
\end{proof}

Now we have controlled the conformal factors $v_k$ near the origin for $t\geq\frac{3}{4}$, we see that 
just as in the proof of Lemma \ref{get_a_limit},
parabolic regularity theory gives us $C^l$ control on $v_k$ for 
$t\in [1,T]$ (for arbitrary $T>1$) within a fixed neighbourhood of the origin. Also as in the proof of Lemma \ref{get_a_limit},
this allows us to pass to a further subsequence and get smooth local convergence of $g_k(t)$ to a limit Ricci flow on the whole space-time region $D\times [1,\infty)$. We still call this extended limit $g_\infty(t)$. We have proved the following corollary, which implies Parts 3 and 4 of Theorem \ref{mainthm}.

\begin{cor}
The Ricci flow $g_\infty(t)$ on $\Do$, constructed as above as a subsequential limit of Ricci flows $g_k(t)$ as defined in Section \ref{gkdef_sect}, can be extended smoothly for $t\geq 1$ to a Ricci flow on the whole disc $D$, and the extended Ricci flow is the smooth local (subsequential) limit of the flows $g_k(t)$
on $D\times [1,\infty)$.
\end{cor}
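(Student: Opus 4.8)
The plan is to upgrade the uniform $C^0$-control on the conformal factors $v_k$ near the origin — already supplied by Lemma \ref{later_control_at_origin} — to uniform $C^l_{loc}$-control for every $l$, and then to extract a further convergent subsequence exactly as in the proof of Lemma \ref{get_a_limit}. So the first step would be to record two-sided bounds for the $v_k$ on a fixed neighbourhood of the origin. Lemma \ref{later_control_at_origin} gives $\sup_{D_{1/4}}v_k(\cdot,t)\leq C+t$ for $t\geq\tfrac34$, hence a $k$-independent upper bound on every finite cylinder $D_{1/4}\times[\tfrac34,T]$. For a matching lower bound, the maximal-stretching property of $g_k(t)$ gives $g_k(t)\geq(1+2t)g_{poin}\geq g_{poin}$ for all $t\geq0$, just as in the proof of Lemma \ref{get_a_limit}; writing $g_{poin}=e^{2v_{poin}}|\dz|^2$ we get $v_k\geq v_{poin}$, and although its $(s,\th)$-expression $-\ln\sinh s$ degenerates as $s\to\infty$, the function $v_{poin}$ is smooth, hence bounded below, on $\overline{D_{1/4}}$. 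Thus on $D_{1/4}\times[\tfrac34,T]$ the $v_k$ are bounded above and below independently of $k$.

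The second step is parabolic regularity. Each $v_k$ solves the two-dimensional Ricci-flow equation $\pl{v_k}{t}=e^{-2v_k}\lap v_k$ for the conformal factor (see Appendix \ref{2Dappendix}), and the two-sided bound just obtained makes this uniformly parabolic, with principal coefficient $e^{-2v_k}$ bounded away from $0$ and $\infty$ uniformly in $k$. Interior estimates (De Giorgi--Nash--Moser for a first H\"older bound, then Schauder, then the standard bootstrap) then give, for every $l\in\N$ and every $T>1$, a $k$-independent $C^l$-bound for $v_k$ on the smaller cylinder $D_{1/8}\times[1,T]$. By Arzel\`a--Ascoli and a diagonal argument in $l$ and $T$, a subsequence of the $v_k$ converges in $C^\infty_{loc}(D_{1/8}\times[1,\infty))$; taking this as a sub-subsequence of the subsequence already fixed after Lemma \ref{shorttimecompleteness} preserves the convergence $g_k(t)\to g_\infty(t)$ on $(\Do)\times[0,\infty)$ while adding a smooth limit on $D_{1/8}\times[1,\infty)$.

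Finally, this new limit agrees with $g_\infty(t)$ on the overlap $(D_{1/8}\backslash\{0\})\times[1,\infty)$ by uniqueness of limits, so it extends $g_\infty(t)$ smoothly across the origin; gluing it with $g_\infty(t)$ on $(\Do)\times[1,\infty)$ yields a smooth family on all of $D\times[1,\infty)$, which is a Ricci flow by passing to the limit in the equation, and by construction $g_k(t)$ converges to it smoothly locally on $D\times[1,\infty)$. I would not expect a genuine obstacle at this stage: the only substantive input — a $k$-independent upper bound for $v_k$ near the origin once $t$ is bounded away from $0$ — is precisely Lemma \ref{later_control_at_origin}, and given that, what remains (uniform parabolicity, interior Schauder bootstrap, Arzel\`a--Ascoli) is the same routine Cheeger--Gromov-type machinery already used to prove Lemma \ref{get_a_limit}; the one place warranting a moment's care is checking that the two-sided bound really does yield uniform parabolicity and hence $k$-independent higher-order estimates.
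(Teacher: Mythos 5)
Your argument is essentially the same as the paper's: the paper also takes the upper bound from Lemma \ref{later_control_at_origin} together with the lower bound $v_k\geq v_{poin}$ already in hand from the maximal-stretching argument of Lemma \ref{get_a_limit}, applies parabolic regularity to get $k$-independent $C^l$ bounds on a fixed neighbourhood of the origin for $t\in[1,T]$, and extracts a further subsequence converging smoothly locally on $D\times[1,\infty)$. The paper states this more tersely, referring back to the proof of Lemma \ref{get_a_limit}; you have simply spelled out the same steps (two-sided $C^0$ bound, uniform parabolicity, interior Schauder bootstrap, Arzel\`a--Ascoli, diagonalisation) in full.
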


Finally, we remark that the applications of parabolic regularity theory which gave the limit $g_\infty(t)$, and its extension across the origin for $t\geq 1$, imply immediately the curvature bounds of Part 5 of Theorem \ref{mainthm}, which completes the proof.

\appendix

\section{Ricci flow in two-dimensions}
\label{2Dappendix}
Hamilton's Ricci flow equation
\beq
\label{RFeq}
\pl{}{t}g(t)=-2\,\Ric[g(t)]
\eeq
for an evolving Riemannian metric $g(t)$ on a manifold $\m$ simplifies in the case that $\m$ is two-dimensional. The flow then  preserves the conformal class of the metric and can be written
$$\pl{}{t}g(t)=-2K g(t),$$
where $K$ is the Gauss curvature of $g(t)$.
In this case, we may take local isothermal coordinates
$x$ and $y$, and write the flow 
$g(t)=e^{2u}|\dz|^2:=e^{2u}(dx^2+dy^2)$
for some locally-defined scalar time-dependent function $u$ which will then satisfy the local equation
\beq
\label{2DRFeq}
\pl{u}{t}=e^{-2u}\lap u = -K.
\eeq
We sometimes abuse terminology by referring also to $u$ itself as a Ricci flow.
If we have upper and lower bounds for $u$ in some local region, then one can apply parabolic regularity theory (e.g. \cite{LSU}) to obtain $C^k$ estimates on the interior.

In this paper, we are either considering standard coordinates $x$ and $y$ on the unit disc in the plane centred at the origin (or other rotationally symmetric subsets of the plane) or else the conformally equivalent logarithmic polar coordinates $s$ and $\th$ on the plane punctured at the origin, defined by 
$z=x+iy=e^{-(s+i\th)}$
for $s\in (0,\infty)$ or $s\in \R$, and $\th\in \R/(2\pi\Z)$.

In these coordinates we can write the conformal factors of the complete hyperbolic metrics $e^{2u(s)}(ds^2+d\th^2)$ on 
$D$, $\Do$ and $D\backslash D_\ep$, for $\ep\in (0,1)$, as
%\begin{equation*}
%\begin{aligned}
%(D,g_{poin}) &: &\  u(s)&=-\ln\sinh s &\ s>0\\
%(\Do,g_{hyp}) &: &\  u(s)&=-\ln s &\ s>0\\
%(D\backslash D_\ep,g_{\ep}) &: 
%&\ u(s)&=-\ln \left[\frac{-\ln\ep}{\pi}\sin\left(\frac{s\pi}{-\ln%\ep}\right)\right]
%&\ s\in (0,-\ln\ep)
%\end{aligned}
%\end{equation*}
\begin{align*}
(D,g_{poin}) &: &\  u(s)&=-\ln\sinh s &\ s>0\\
(\Do,g_{hyp}) &: &\  u(s)&=-\ln s &\ s>0\\
(D\backslash D_\ep,g_{\ep}) &: 
&\ u(s)&=-\ln \left[\frac{-\ln\ep}{\pi}\sin\left(\frac{s\pi}{-\ln\ep}\right)\right]
&\ s\in (0,-\ln\ep).
\end{align*}
We see that $g_{poin}\leq g_{hyp}\leq g_{\ep}$, although this 
can be seen without computation via the maximum principle because
$D\backslash D_\ep \subset \Do\subset D$.

\section{Comparison principle}

The following standard comparison principle is a corrected version of \cite[Theorem A.1]{GT1}. 
%All prior applications we made of this comparison principle were designed for this version of the result, and require no modification. 

\begin{lemma}[Direct comparison principle]
  \label{thm:direct-comp-principle}
  
  Let $\Omega\subset\mathbb{R}^2$ be an open, bounded domain and for some
  $T>0$ let $u\in C^{1,2}\bigl(\Omega\times(0,T)\bigr)\cap
  C\bigl(\bar\Omega\times[0,T]\bigr)$ and $v\in
  C^{1,2}\bigl(\Omega\times(0,T)\bigr)\cap
  C\bigl(\Omega\times[0,T]\bigr)$ both be solutions of the Ricci flow equation \eqref{2DRFeq}
  for the conformal factor of the metric.
  Furthermore, suppose that there exists some function $V\in C(\Omega)$ with $V(x)\to\infty$ as $x\to\partial \Omega$ such that
   for each $t\in[0,T]$ we have
  $v(x,t)\ge V(x)$ for all $x\in\Omega$. 
  If $v(x,0)\ge u(x,0)$ for all $x\in\Omega$, then $v\ge u$ on
  $\Omega\times[0,T]$.
\end{lemma}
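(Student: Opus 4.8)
The plan is to argue by contradiction, using the fact that $u$ is continuous up to $\partial\Omega$ while $v$ blows up near $\partial\Omega$, to localise the problem to a compact subdomain where the classical parabolic maximum principle applies. Suppose the conclusion fails, so that $\sup_{\Omega\times[0,T]}(u-v)=:m>0$. Since $u$ is bounded on $\bar\Omega\times[0,T]$ and $v\ge V$ with $V\to\infty$ at $\partial\Omega$, the set $\{(x,t): u(x,t)-v(x,t)\ge m/2\}$ is contained in $\Omega'\times[0,T]$ for some $\Omega'\subset\subset\Omega$; in particular the supremum $m$ is attained at some $(x_0,t_0)\in\Omega\times(0,T]$, with $t_0>0$ because $v(\cdot,0)\ge u(\cdot,0)$.

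The main technical point is to handle the nonlinearity $e^{-2u}\Delta u$: the equation \eqref{2DRFeq} is not in the form to which a textbook linear comparison principle applies directly. I would introduce the standard device of comparing $v$ with $u-\ep t$ (or adding a small $\ep/(T-t)$) to obtain strict inequalities, then at an interior space-time maximum of $(u-\ep t)-v$ use $\partial_t(u-v)\le 0$ (from the time derivative at $t_0$, or $=0$ if $t_0<T$, $\ge 0$ reversed if $t_0=T$ — handled by the $\ep$), $\Delta(u-v)\le 0$, and $\nabla u=\nabla v$ at that point. Writing $\partial_t u=e^{-2u}\Delta u$ and $\partial_t v=e^{-2v}\Delta v$, at the maximum point one has $u\ge v$, hence $e^{-2u}\le e^{-2v}$, and combining these with the sign of $\Delta v$ at the point yields the contradiction $0<\ep\le \partial_t(u-v)\le e^{-2u}\Delta u-e^{-2v}\Delta v\le 0$ once one checks the sign of $\Delta v$; the case $\Delta v\le 0$ and $\Delta v>0$ are treated by monotonicity of $q\mapsto e^{-2u}q$ versus $q\mapsto e^{-2v}q$ together with $\Delta u\le\Delta v$. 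Letting $\ep\downto 0$ closes the argument.

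The step I expect to be the main obstacle is making the localisation fully rigorous given only the one-sided barrier hypothesis $v\ge V$: one must ensure the perturbed function $(u-\ep t)-v$ still attains its positive supremum at an \emph{interior} point of $\Omega$ (not escaping to $\partial\Omega$, where $v$ is only assumed lower semicontinuous via the bound $v\ge V$ and $v\in C(\Omega\times[0,T])$ but possibly not up to the boundary), and that this happens at a time $t_0>0$. This is exactly where the hypothesis $V(x)\to\infty$ as $x\to\partial\Omega$ is used: it forces $(u-\ep t)-v\to-\infty$ near $\partial\Omega$ uniformly in $t$, so a positive supremum cannot be approached at the boundary, and a compactness argument on the resulting closed subregion $\overline{\Omega'}\times[0,T]$ produces the interior maximum point. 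Once that is in hand, the remainder is the routine maximum-principle computation sketched above.
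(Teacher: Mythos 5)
Your localisation step is correct and matches the paper: the barrier $V$ forces $u-v$ (or any small perturbation of it) to be very negative near $\partial\Omega$, while continuity of $u$ on $\bar\Omega\times[0,T]$ keeps $u$ bounded, so the positive part of the difference is trapped in a compact $\overline{\Omega'}\times[0,T]$, and the hypothesis $v(\cdot,0)\ge u(\cdot,0)$ rules out $t_0=0$.

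The gap is in the maximum-principle computation. At an interior maximum of $(u-\ep t)-v$ with positive value you have $u>v$, hence $e^{-2u}<e^{-2v}$, and $\Delta(u-v)\le 0$. You then need $e^{-2u}\Delta u-e^{-2v}\Delta v\le 0$. Writing
\begin{equation*}
e^{-2u}\Delta u-e^{-2v}\Delta v = e^{-2u}\Delta(u-v)+(e^{-2u}-e^{-2v})\Delta v
= e^{-2v}\Delta(u-v)+(e^{-2u}-e^{-2v})\Delta u,
\end{equation*}
the first term in each decomposition is $\le 0$, but the second has sign opposite to $\Delta v$ (respectively $\Delta u$), and when $\Delta u\le\Delta v<0$ \emph{both} decompositions give a nonnegative second term with no control on its size. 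The ``monotonicity of $q\mapsto e^{-2u}q$'' remark does not resolve this: $e^{-2u}\Delta u$ is monotone in $\Delta u$ but not in $u$, and there is no a priori bound on $\Delta v$ at the maximum point. So the case analysis you defer to is precisely where the argument fails.

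The paper avoids this by making the exponential weights \emph{equal} at the comparison point, rather than merely ordered. Two devices accomplish this. First, instead of perturbing $u$ by $-\ep t$, the paper perturbs $v$ by the parabolic rescaling
\begin{equation*}
v_\varepsilon(x,t):=v\Bigl(x,\tfrac1\varepsilon\ln(\varepsilon t+1)\Bigr)+\tfrac12\ln(\varepsilon t+1),
\end{equation*}
which, because the Ricci flow equation \eqref{2DRFeq} is invariant under this rescaling, satisfies $\partial_t v_\varepsilon-e^{-2v_\varepsilon}\Delta v_\varepsilon=\frac{\varepsilon}{2(\varepsilon t+1)}>0$ exactly; the naive perturbation $v+\varepsilon t$ would not be a strict supersolution because of the $e^{-2v}$ factor. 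Second, rather than maximising a difference, the paper takes the \emph{first time} $t_0$ at which $\min_x(v_\varepsilon-u)(\cdot,t)$ reaches $0$; at the touching point $(x_0,t_0)$ one has $v_\varepsilon=u$, so $e^{-2v_\varepsilon}=e^{-2u}$ and the nonlinear weights cancel, leaving the purely linear inequality $0<\partial_t(v_\varepsilon-u)-e^{-2u}\Delta(v_\varepsilon-u)\le 0$, a clean contradiction. If you want to repair your own route, you would need to replace the $-\ep t$ perturbation by a device (such as this rescaling, or a first-touching argument) that forces equality of the conformal factors at the critical point.
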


\begin{proof} (Adjustment of that in \cite{GT1}.)
  For every $\varepsilon>0$ consider
  \[ v_\varepsilon(x,t) := v\left(x,\frac1\varepsilon\ln(\varepsilon t+1)\right)
  +\frac12\ln(\varepsilon t+1)\qquad\text{for all }
  (x,t)\in\Omega\times[0,T], \]
  which is well-defined since $\frac1\varepsilon\ln(\varepsilon t+1)\le
  t$ for all $t\ge0$. Observe that $v_\varepsilon$ is a slight modification of
  $v$, with $v_\varepsilon(\cdot,0)=v(\cdot,0)$, and $v_\varepsilon$ converges pointwise
  to $v$ as $\varepsilon\to0$, but in contrast to $v$ it is a
  strict supersolution of
  \eqref{2DRFeq}:
  \begin{align}
    \label{eq:rf-cf-mod}
    \left(\pddt v_\varepsilon-\ee^{-2v_\varepsilon}\Delta
      v_\varepsilon\right)(x,t) 
    &= \frac1{\varepsilon t+1}\left(\pddt v-\ee^{-2v}\Delta
      v\right)\left(x,\frac1\varepsilon\ln(\varepsilon t+1)\right)
    + \frac\varepsilon{2(\varepsilon t+1)} \notag\\
    &= \frac\varepsilon{2(\varepsilon t+1)} >0 
    \qquad\qquad \text{for }
    (x,t)\in\Omega\times[0,T].
  \end{align}
  We are going to prove $(v_\varepsilon-u)\ge0$ on
  $\Om\times[0,T]$ and conclude the theorem's statement by letting
  $\varepsilon\to0$. 

  Since by hypothesis $u$ is continuous on $\bar\Omega\times [0,T]$ and (for each $t\in[0,T]$)
  $v_\varepsilon(\cdot,t)\geq V$, where $V$ blows up near the boundary $\partial \Om$, there must exist a subdomain $\Omega'\subset\subset\Omega$ so that 
\[ (v_\varepsilon-u)(x,t)\geq 1 \]
throughout $(\Omega\backslash\Omega') \times [0,T]$.
  Now assume that $(v_\varepsilon-u)$ becomes negative in $\Omega\times[0,T]$,
  and define the time $t_0$ at which $(v_\varepsilon-u)$
  first becomes negative by 
  \[ t_0 := \inf\Bigl\{t\in[0,T]:
  \min_{x\in\Omega}(v_\varepsilon-u)(x,t)<0 \Bigr\}\in[0,T). \]
Then there exists a minimum $x_0\in\Omega$ of $(v_\ep-u)(\cdot,t_0)$ where  
%\cmt{note subtlety that we can't take just any minimum $x_0$ - if $t_0=0$..}
  \[ (v_\varepsilon-u)(x_0,t_0) = 0,\quad
  \Delta(v_\varepsilon-u)(x_0,t_0)\ge0\quad\text{ and }\quad
  \pddt(v_\varepsilon-u)(x_0,t_0)\le0.\]
  Subtracting the Ricci flow equation
  \eqref{2DRFeq} from \eqref{eq:rf-cf-mod} 
  at this point $(t_0,x_0)$, we find
  \begin{align*}
    0 &< 
    \left(\pddt v_\varepsilon - \ee^{-2v_\varepsilon}\Delta
      v_\varepsilon\right)(x_0,t_0)    
    -\left(\pddt u - \ee^{-2u}\Delta u\right)(x_0,t_0) \\
    &= \pddt(v_\varepsilon -u)(x_0,t_0) -
    \ee^{-2u(x_0,t_0)}\Delta(v_\varepsilon-u)(x_0,t_0) \le 0,
  \end{align*}
  which is a contradiction.
  Therefore $v_\varepsilon\ge u$ on $\Omega\times[0,T]$, and
  the corresponding statement for $v$ follows by letting $\varepsilon\to0$.
\end{proof}

{\sc mathematics institute, university of warwick, coventry, CV4 7AL,
uk}\\
\url{http://www.warwick.ac.uk/~maseq}
\end{document}